\documentclass[12pt,leqno]{amsart}
\usepackage{tikz}
\usetikzlibrary{automata,positioning}
\usepackage{amssymb,amsthm,amsmath,latexsym}
\usepackage{graphicx} 
\graphicspath{{figures/}} 
\usepackage{booktabs} 
\usepackage[font=small,labelfont=bf]{caption} 
\usepackage{amsfonts, amsmath, amsthm, amssymb} 
\usepackage{wrapfig} 

\newtheorem{theorem}{\sc Theorem}[section]

\newtheorem{lemma}[theorem]{\sc Lemma}
\newtheorem{proposition}[theorem]{\sc Proposition}
\newtheorem{corollary}[theorem]{\sc Corollary}

 \newtheorem*{thmA}{Theorem A}
 \newtheorem*{thmB}{Theorem B}
 \newtheorem*{thmC}{Theorem C}

\title{Self-similar abelian groups and their centralizers}

\author{Alex C. Dantas}
\address{Departamento de Matem\'atica, Universidade de Bras\'ilia,
Brasilia-DF, 70910-900 Brazil}
\email{(Dantas) alexcdan@gmail.com}

\author{Tulio M. G. Santos}
\address{Instituto Federal Goiano,
Campos Belos-GO, 73840-000 Brazil}
\email{(Santos) tulio.gentil@ifgoiano.edu.br}

\author{Said N. Sidki}
\address{Departamento de Matem\'atica, Universidade de Bras\'ilia,
Brasilia-DF, 70910-900 Brazil}
\email{(Sidki) ssidki@gmail.com}

\subjclass[2020]{20E08, 20B27, 20K20.}
\keywords{}

\begin{document}
\maketitle

\begin{abstract}

We extend results on transitive self-similar abelian subgroups of the group of automorphisms $\mathcal{A}_m$ of an $m$-ary tree $\mathcal{T}_m$ in \cite{BS}, to the general case where the permutation group induced on the first level of the tree has $s\geq 1$ orbits. We prove that such a group $A$ embeds in a self-similar abelian group $A^*$  which is also a maximal abelian subgroup of $\mathcal{A}_m$. The construction of $A^*$ is based on the definition of a free monoid $\Delta$ of rank $s$ of partial diagonal monomorphisms of $\mathcal{A}_m$,  which is used to determine the structure of $C_{\mathcal{A}_m}(A)$, the centralizer of $A$ in $\mathcal{A}_m$. Indeed, we prove 
       $A^*=C_{\mathcal{A}_m} (\Delta(A))
           =\overline{ \Delta({B(A)})}$,
where $B(A)$ denotes the product of the projections of $A$ in its action on the different $s$ orbits of maximal subtrees of $\mathcal{T}_m$ and bar denotes the topological closure. When $A$ is a torsion self-similar abelian group, it is shown that it is necessarily of  finite exponent. Moreover, we extend recent constructions of self-similar free abelian groups of infinite enumerable rank to examples of such groups which are also $\Delta$-invariant for $s=2$. Finally, we focus on self-similar cyclic groups of automorphisms of $\mathcal{T}_m$ and compute their centralizers when $m=4.$ 
\end{abstract}

\section{Introduction}
Our purpose in this paper is to study intransitive self-similar abelian groups and their centralizers, generalizing known results for the transitive case.

Nekrashevych and Sidki characterized in \cite{NS} self-similar free abelian subgroups of finite rank of the group of automorphisms of the binary tree $\mathcal{T}_2$. Later Brunner and Sidki 
conducted in \cite{BS} a thorough study of transitive self-similar abelian subgroups of $\mathcal{A}_m$, the group of automorphisms of the $m$-ary tree $\mathcal{T}_m$, based on two facts. The first is that $\mathcal{A}_m$ is a topological group and for an abelian subgroup $G$ of $\mathcal{A}_m$, its topological closure $\overline{G}$ is again abelian. The second, when $G$ is a transitive self-similar abelian group, its closure  under the monoid $\langle x \rangle$ generated by the \textit{diagonal monomorphism} 
$$x:\alpha \mapsto (\alpha, \alpha,\dots, \alpha)$$
from $\mathcal{A}_m$ to its first level stabilizer, is again self-similar abelian.\footnote{It was asserted erroneously in Proposition 1, page 459, of \cite{BS} that the topological and diagonal closure operations commute and then that the group $A^*$ was a ``combination'' of these two closures applied to $A$. The precise statement is that in that paper $A^*$   should have been, first the diagonal closure applied to $A$ then followed by the topological closure, which in reality was the order used.}

Let $G$  be a subgroup of $\mathcal{A}_m$. Consider $P=P(G) \leqslant Sym(m)$ the permutation group, or group of activities, induced by $G$ on $Y=\{1,\dots,m\}$ and let
$$O_{(1)}=\{1,\dots,m_1\},O_{(2)}=\{m_1+1,\dots,m_1+m_2\}, \dots,$$
$$O_{(s)}=\{m_1+\cdots +m_{s-1}+1,\dots,m\}$$ of respective size $m_1,m_2,\dots, m_s,$ be the orbits of $P$ in this action; we will view these orbits as ordered in the manner as they are written here. For this set of orbits, an element $\xi$ of $Sym(m)$ is called \textit{rigid}, provided it permutes the set of orbits and is order preserving, in the sense that for all $1\leq i \leq s$ and all $k\leq l$ in $O_{(i)}$, $(k)\xi\leq (l)\xi$ in $O_{((i)\xi)}$.

The group $P$ induces transitive permutation groups $P_{(i)}$ on $O_{(i)}$, $i=1,\dots,s$, and $P$ is naturally identified with a sub-direct product of the $P_{(i)}$'s. We call $(m_1,\dots,m_s)$ the \textit{orbit-type} of $G$ and $\left(P_{(1)},\dots,P_{(s)}\right)$ the \textit{permutation-type} of $G$.
An element $\sigma$ of  $P$ decomposes as $\sigma=\sigma_{(1)} \sigma_{(2)}   \cdots  \sigma_{(s)}$, where $\sigma_{(i)} \in P_{(i)}$.

Every  $\alpha=(\alpha_1,\dots,\alpha_{m})\sigma \in G$ can be written as 
$$\alpha=(\alpha_{(1)},\dots, \alpha_{(s)})\sigma,\,\,\text{where} $$
$$\alpha_{(1)}=(\alpha_1,\dots,\alpha_{m_1}), \cdots, \alpha_{(s)}=(\alpha_{m_1+\cdots+m_{s-1}+1},\dots,\alpha_{m}).$$
Therefore, $\alpha$ can be factored in the form 
$$ \alpha=\alpha_{[1]} \cdots \alpha_{[s]}, \,\, \text{where}$$
$$\alpha_{[1]}=(\alpha_{(1)},e,\dots,e)\sigma_{(1)}, \cdots, \alpha_{[s]}=(e,\dots, e,\alpha_{(s)})\sigma_{(s)};$$
the factors $\alpha_{[i]}$ commute among themselves. 

Define the subgroups of $\mathcal{A}_m$
$$G_{[1]}=\{ (\alpha_{(1)},e,\dots ,e)\sigma_{(1)} \,\,| \,\,\alpha \in G\},$$ $$\vdots$$ $$G_{[s]}= \{ (e,\dots, e ,\alpha_{(s)})\sigma_{(s)}\,\, |\,\, \alpha \in G\}$$
and denote the group generated by the $G_{[i]}$'s by $B(G)$. Clearly, $B(G)= G_{[1]}\cdots G_{[s]}$ is a direct product of its factors and $G$ is a sub-direct product of $B(G)$.

The symmetric group $Sym(m)$ is naturally embedded in $\mathcal{A}_m$ as a group of rigid permutations of the set of maximal subtrees of $\mathcal{T}_m$; therefore, any subgroup of $Sym(m)$ in this embedding is self-similar in $\mathcal{A}_m$. 

In the intransitive setting, we need to substitute the monoide $\langle x \rangle$ by the  monoid $\Delta=\langle x_1,\dots,x_s \rangle$, where for $1\leq i \leq s,$
$x_i$ is the \textit{partial diagonal monomorphism} from $\mathcal{A}_m$ to $Stab_{\mathcal{A}_m}(1)$
$$\alpha \mapsto (e,\dots, e, \alpha,\dots, \alpha,e,\dots,e)$$ 
with $\alpha$ occurring in coordinates from the orbit $O_{(i)}$ and the trivial automorphism $e$ occurring in the other positions.
We will denote the closure of $G$ under $\Delta$ by 
$$\Delta(G)= \langle G^{\omega} \,\,|\,\, \omega \in \Delta \rangle.$$

When $A$ is a transitive self-similar abelian group, it was shown by Brunner and Sidki in \cite{BS} that the centralizer $C_{\mathcal{A}_m}(A)$   is again abelian and self-similar. Both these properties of the centralizer fail in general. The proper extension is as follows.  
\begin{thmA}
	Let  $A\leqslant \mathcal{A}_m$ be a self-similar abelian group and define $A^*=C_{\mathcal{A}_m}(\Delta (A))$. Then
	\begin{enumerate}
		\item [(i)] $\Delta(A), \Delta(B(A))$ and $\overline{\Delta(B(A))}$ are again self-similar abelian groups and of the same permutation-type as $A$;
		\item [(ii)] $A^*$ is a maximal abelian subgroup of $\mathcal{A}_m$;
		\item [(iii)] $ A^*=\overline{\Delta(B(A))}.$
	\end{enumerate}
\end{thmA}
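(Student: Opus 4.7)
The plan is to prove (i) directly, then the two inclusions of (iii), and obtain (ii) as a corollary. For (i), the images $(A)x_i$ and $(A)x_j$ commute for $i\neq j$ because their first-level supports lie in disjoint orbits, while $(A)x_i$ is a diagonal copy of the abelian group $A$ acting on the subtrees indexed by $O_{(i)}$; together with the factorization $\alpha=\alpha_{[1]}\cdots\alpha_{[s]}$ into commuting factors, this gives the abelianness of $\Delta(A)$, $B(A)$ and $\Delta(B(A))$. Self-similarity is preserved because the first-level states of each $(\alpha)x_i$ are either $\alpha$ or the identity, both in the closure. The permutation-type is unchanged because each $x_i$-image is trivially active at the root and each generator of $B(A)$ carries only the activity $\sigma_{(j)}$ already present in $A$. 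Topological closure preserves all three properties by continuity of commutators and of state projections together with the finiteness of the top level.

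For the easy inclusion of (iii), the factorization above embeds $A$ in $B(A)$, so $\Delta(A)\subseteq\Delta(B(A))$; since $\Delta(B(A))$ is abelian by (i) it lies inside $C_{\mathcal{A}_m}(\Delta(A))=A^*$. As a centralizer in the topological group $\mathcal{A}_m$, $A^*$ is closed, hence $\overline{\Delta(B(A))}\subseteq A^*$.

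The core of the proof is the reverse inclusion $A^*\subseteq\overline{\Delta(B(A))}$. Take $\beta=(\beta_1,\ldots,\beta_m)\tau\in A^*$. A position-wise analysis of $[\beta,(\alpha)x_i]=e$, with $\alpha$ ranging over $A$ and $i$ over the orbit indices, forces $\tau$ to preserve each $O_{(i)}$ setwise and each state $\beta_j$ with $j\in O_{(i)}$ to centralize $A$; repeating the analysis for $(\alpha)\omega$ with $\omega\in\Delta$ upgrades this to $\beta_j\in A^*$, so the structure is self-reproducing under state projection. Commutation with the non-trivially active elements of $A$ further pins $\tau|_{O_{(i)}}$ inside $C_{Sym(O_{(i)})}(P_{(i)})$, which coincides with $P_{(i)}$ because the transitive abelian group $P_{(i)}$ is regular and therefore self-centralizing; hence $\tau\in P_{(1)}\times\cdots\times P_{(s)}$, matching the activity range of $\Delta(B(A))$. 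The remaining task is to approximate $\beta$ to each level $n$ by an element $\gamma_n\in\Delta(B(A))$. I would build $\gamma_n$ by induction on $n$: the rigidity just established yields a decomposition $\beta=\beta_{[1]}\cdots\beta_{[s]}$ with $\beta_{[i]}$ supported on $O_{(i)}$; the inductive hypothesis, applied to the states one level deeper, produces approximants for each $\beta_{[i]}$, and these reassemble via the partial diagonals $x_i$ into the required $\gamma_n$. The main obstacle is the bookkeeping in this inductive reassembly: ensuring the $s$ independent approximations glue consistently and that the recovered first-level state genuinely lands in the factor $A_{[i]}$ rather than in some larger commuting group — this is precisely what forces the statement to refer to $B(A)$ and not merely to $A$.

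Finally, (ii) is immediate from (iii): $A^*=\overline{\Delta(B(A))}$ is abelian by (i), and since $\Delta(A)\subseteq A^*$, any element of $\mathcal{A}_m$ centralizing $A^*$ also centralizes $\Delta(A)$ and hence lies in $A^*$. Thus $C_{\mathcal{A}_m}(A^*)=A^*$, the standard characterization of a maximal abelian subgroup.
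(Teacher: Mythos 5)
Your treatment of (i), of the easy inclusion $\overline{\Delta(B(A))}\subseteq A^*$, and of the deduction of maximality (ii) from (iii) is sound and runs parallel to the paper. The opening moves of the hard inclusion are also right: commuting $\beta=(\beta_1,\dots,\beta_m)\tau\in A^*$ against the elements $\alpha^{w x_i}$ does force $\tau$ to preserve each orbit $O_{(i)}$, each state $\beta_j$ to lie in $C(\Delta(A))=A^*$, and $\tau|_{O_{(i)}}\in C_{Sym(O_{(i)})}(P_{(i)})=P_{(i)}$ by regularity of the transitive abelian group $P_{(i)}$, so that $\beta$ can be corrected by an element of $B(A)$ so as to lie in $Stab_{A^*}(1)$. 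This is exactly the content of the paper's steps (i)--(ii) of its Theorem 4.3, reached by a slightly different (and fine) route.

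The gap is the step you explicitly leave as an ``obstacle'', and it is the crux: knowing only that each state $\beta_j$ lies in $A^*$ does not allow you to ``reassemble via the partial diagonals $x_i$'', because an element in the image of $x_i$ has \emph{equal} entries along $O_{(i)}$, whereas a priori the states of an element of $Stab_{A^*}(1)$ within one orbit are merely conjugate; approximating the $m_i$ states separately and then feeding them into $x_i$ is not a legal move. The missing lemma is the identity $Stab_{A^*}(1)=(A^*)^{x_1}\cdots(A^*)^{x_s}$, and it is where self-similarity of $A$ enters: for $\gamma\in Stab_{A^*}(1)$ and $a=(a_1,\dots,a_m)\sigma\in A$, the relation $\gamma=\gamma^{a}$ gives $\gamma_{j^{\sigma}}=\gamma_j^{\,a_j}$, and since $a_j\in A\subseteq\Delta(A)$ while $\gamma_j\in C(\Delta(A))$, this yields $\gamma_{j^{\sigma}}=\gamma_j$; transitivity of $P_{(i)}$ on $O_{(i)}$ then makes each block constant, i.e. $\gamma\in(A^*)^{x_1}\cdots(A^*)^{x_s}$. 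With this in hand, the paper does not induct on the states of $\beta$ at all: it iterates the two factorizations $A^*=Stab_{A^*}(1)\,B(A)$ and $Stab_{A^*}(1)=(A^*)^{x_1}\cdots(A^*)^{x_s}$ to peel off factors $b^{(k)}\in\prod_{|w|=k}(B(A))^{w}$ with $g\,(b^{(0)})^{-1}\cdots(b^{(k)})^{-1}\in Stab_{A^*}(k+1)$, and the resulting infinite product converges in $\overline{\Delta(B(A))}$ to $g$ because $\bigcap_{k}Stab_{A^*}(k)=1$. Your outline matches the paper up to this point, but without the constancy-of-blocks lemma the inductive reassembly you describe does not go through.
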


\begin{thmB}
	There exists a finitely generated subgroup $H$ of $B(A)$ such that $A^*=\overline{\Delta({H})}$.
\end{thmB}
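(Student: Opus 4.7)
The plan is to leverage Theorem A (iii), which gives $A^*=\overline{\Delta(B(A))}$, and then exhibit an $H\leq B(A)$ finitely generated with $\overline{\Delta(H)}=\overline{\Delta(B(A))}$. Since $\overline{\Delta(H)}\subseteq A^*$ is automatic from $H\leq B(A)$, the goal is the reverse inclusion. Noting that $\overline{\Delta(H)}$ is $\Delta$-invariant (each $\omega\in\Delta$ acts continuously on $\mathcal{A}_m$ and $\Delta(H)^{\omega}\subseteq\Delta(H)$), once $B(A)\subseteq\overline{\Delta(H)}$ is established, also $B(A)^{\omega}\subseteq\overline{\Delta(H)}$ for every $\omega\in\Delta$, hence $\Delta(B(A))\subseteq\overline{\Delta(H)}$ and $A^*\subseteq\overline{\Delta(H)}$. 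Since $B(A)=A_{[1]}\cdots A_{[s]}$, the task further reduces to showing $A_{[i]}\subseteq\overline{\Delta(H)}$ for each $i$.

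For the construction, I would pick finitely many $a_1,\ldots,a_r\in A$ whose activities generate the finite group $P$, and, via the orbit-factorization $a_l=a_{l,[1]}\cdots a_{l,[s]}$ with $a_{l,[i]}\in A_{[i]}$, define
$$H:=\langle a_{l,[i]} : 1\leq l\leq r,\ 1\leq i\leq s\rangle\leq B(A),$$
a finitely generated subgroup with at most $rs$ generators. By choice, the image of $H$ in $\mathcal{A}_m/Stab_{\mathcal{A}_m}(1)$ equals $P$, since the projections $\sigma_{l,(i)}$ generate each $P_{(i)}$. I would then verify $A_{[i]}\subseteq\overline{\Delta(H)}$ by a level-by-level inductive argument: for $\beta\in A_{[i]}$ and $n\geq 1$, construct $g_n\in\Delta(H)$ with $\beta g_n^{-1}\in Stab_{\mathcal{A}_m}(n)$, extending $g_{n-1}$ by multiplying by a suitable product of elements $h^{\omega}$ with $h\in H$ and $|\omega|=n-1$, whose level-$n$ permutation pattern matches that of $\beta g_{n-1}^{-1}$. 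Passing to the limit and using that $\mathcal{A}_m$ is profinite yields $\beta\in\overline{\Delta(H)}$.

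The hard part will be the inductive step: showing that the level-$n$ permutation pattern of any $\beta g_{n-1}^{-1}$ is reproducible using only the fixed finite set $H$. This relies on two facts. First, by self-similarity of $A$, the level-$k$ states of any element of $A$ lie again in $A$, so their level-$1$ activities lie in $P=H\ \mathrm{mod}\ Stab_{\mathcal{A}_m}(1)$; hence the level-$n$ increments coming from $\Delta(B(A))$ are drawn from $P$ at each path of length $n-1$. Second, the monoid $\Delta$ places copies of $H$ at every depth, in particular $H^{x_{i_1}\cdots x_{i_{n-1}}}$ at depth $n-1$, supplying the required level-$n$ modifications with permutation patterns in $P$. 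The abelian structure of $A$ further constrains these patterns enough that the finite $H$ suffices at every level, generalizing the transitive-case analysis in \cite{BS}.
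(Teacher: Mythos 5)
Your construction of $H$ (lifts to $B(A)$ of finitely many generators of the activity groups $P_{(i)}$) is the same as the paper's, and your reduction to showing $B(A)\subseteq\overline{\Delta(H)}$ is sound. The problem is the inductive step, which you yourself flag as "the hard part" and then justify only by noting that (a) activities of states of elements of $A$ lie in $P$ and (b) $\Delta$ places copies of $H$ at every depth. That is not enough. The only corrections available at stage $n$ in your scheme are products of elements $h^{\omega}$ with $|\omega|=n-1$, and any such product has level-$(n-1)$ states that are \emph{constant} on each block $O_{(j_{n-1})}\times\cdots\times O_{(j_1)}$ of the $(n-1)$-st level. So to kill the level-$n$ activity of the remainder $\beta g_{n-1}^{-1}$ you need to know that its level-$(n-1)$ states have activities that are constant on these blocks (with common value in $P(B(A))$, the image of $H$); knowing merely that each individual activity lies in $P$ does not let a block-constant correction match an arbitrary pattern. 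Your appeal to "the abelian structure of $A$ further constrains these patterns enough" is exactly the missing argument, not a proof of it.

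The required block-constancy is precisely the structural input the paper uses: Proposition \ref{teo1}(i) (first-level stabilizers of self-similar abelian groups decompose into partial-diagonal pieces, applicable to $\Delta(B(A))$ or to $B(A)$) together with Theorem \ref{cent}(i)--(ii), namely $A^*=Stab_{A^*}(1)B(A)$ and $Stab_{A^*}(1)=(A^*)^{x_1}\cdots(A^*)^{x_s}$; the remainder lies in $Stab_{A^*}(n-1)$ (or in $Stab_{\Delta(B(A))}(n-1)$), and iterating these identities is what forces its level-$(n-1)$ states to be block-constant with activities in $P(B(A))$. The paper's proof avoids redoing your induction by the short substitution: $B(A)\leqslant Stab_{B(A)}(1)H$ and $Stab_{B(A)}(1)\leqslant B(A)^{x_1}\cdots B(A)^{x_s}$, whence $A^*=Stab_{A^*}(1)H$, and then the collection argument of Theorem \ref{cent}(iii) runs verbatim with $H$ in place of $B(A)$. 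If you insert the block-decomposition lemma (Proposition \ref{teo1}(i) for $\Delta(B(A))$, or Theorem \ref{cent}(ii)) into your inductive step, your argument closes and becomes essentially the paper's; as written, it has a genuine gap. (Minor point: the image of your $H$ modulo $Stab_{\mathcal{A}_m}(1)$ is $P_{(1)}\times\cdots\times P_{(s)}=P(B(A))$, not $P$; this is harmless and in fact what you need.)
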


\begin{corollary}\label{c1}
	An abelian torsion group of infinite exponent cannot have a faithful representation as a  self-similar group.
\end{corollary}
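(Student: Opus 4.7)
The plan is to reduce to Theorem~B and then exploit the fact that, in the topological group $\mathcal{A}_m$, having exponent dividing a fixed integer $n$ is a closed condition. Equivalently, I will show that every self-similar abelian torsion subgroup $A$ of $\mathcal{A}_m$ has finite exponent; the corollary is then its contrapositive.

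The first step is to observe that $B(A)$ is torsion. Indeed, if $\alpha\in A$ has order $n$, then the commuting factorization $\alpha=\alpha_{[1]}\cdots\alpha_{[s]}$, together with the fact that the $\alpha_{[i]}$ are supported on the pairwise disjoint collections of first-level subtrees indexed by $O_{(1)},\dots,O_{(s)}$, forces $\alpha_{[i]}^{\,n}=e$ for each $i$. Thus every $A_{[i]}$ consists of torsion elements and $B(A)=A_{[1]}\times\cdots\times A_{[s]}$ is torsion.

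I would then invoke Theorem~B to obtain a finitely generated subgroup $H\leqslant B(A)$ with $A^*=\overline{\Delta(H)}$. Since $H$ is finitely generated, abelian, and torsion, it is finite; let $n$ denote its exponent. Each partial diagonal monomorphism $x_i$ preserves element orders, so every image $H^{\omega}$ with $\omega\in\Delta$ has exponent dividing $n$. Because $\Delta(H)\leqslant A^*$ is abelian and is generated by these images, it too has exponent dividing $n$.

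Finally, the subset $\{g\in\mathcal{A}_m : g^{n}=e\}$ is closed: the $n$-th power map is continuous and $\mathcal{A}_m$ is Hausdorff. Therefore $A^*=\overline{\Delta(H)}$ has exponent dividing $n$, and since $A\leqslant\Delta(A)\leqslant A^*$ by Theorem~A, so does $A$. The only genuinely nontrivial input is Theorem~B, which supplies the reduction to a finite group $H$; the remainder is an elementary continuity argument and poses no real obstacle.
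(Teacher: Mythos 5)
Your proof is correct and follows essentially the same route as the paper: both reduce via Theorem~B to a finitely generated (hence finite) subgroup $H\leqslant B(A)$ and conclude that $A^*=\overline{\Delta(H)}$, and hence $A\leqslant A^*$, has exponent equal to that of $H$. You merely spell out the details the paper leaves implicit (torsionness of $B(A)$, order-preservation under the monomorphisms in $\Delta$, and closedness of the exponent-$n$ condition), which is a faithful elaboration rather than a different argument.
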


Let $j$ be a positive integer, $\alpha=(e,\dots,e,\alpha^{x^{j-1}})(1\, \cdots \,m)$ and $D_m(j)$ the group generated by $Q(\alpha)=\{\alpha, \alpha^{x},\dots, \alpha^{x^{j-1}}\},$ the states of $\alpha.$ 
Then $D_m(j)$ is a free abelian group of rank $j$ which is transitive self-similar and diagonally closed. In contrast, in the general case, we have the following result.

\begin{thmC}
	Let $A$ be a free abelian group.
	\begin{enumerate}
		\item  [(i)] Suppose $A$ has finite rank.
		\begin{enumerate}
			\item [(a)] If $A$ is a non-transitive self-similar group then $\Delta({A})$ is not finitely generated;
			\item[(b)] Let $A$  be a transitive self-similar subgroup of $\mathcal{A}_m$. Then the self-similar representation of $A$ extends to a non-transitive self-similar representation into $\mathcal{A}_{m+1}$ having orbit-type $(m,1)$ such that $\Delta(A)$, with respect to the second representation, contains a self-similar free abelian group of infinite enumerable rank.
		\end{enumerate}		
		\item[(ii)] Suppose $A$ has infinite countable rank. Then $A$ can be realized as a self-similar group of orbit-type $(m,1),$ invariant with respect to $\Delta=\langle x_1,x_2 \rangle$.
	\end{enumerate}
\end{thmC}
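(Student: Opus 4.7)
The plan is to treat the three parts separately, making repeated use of Theorem A.

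For (i)(a), I would observe that $\Delta(A)$ is abelian by Theorem A, so finite generation would force finite torsion-free rank. For each $k\geq 0$, the $s^k$ words $\omega$ of length $k$ in the free monoid $\Delta$ yield subgroups $A^\omega\cong A$ of rank $r$ whose supports at level $k$ of $\mathcal{T}_m$ are pairwise disjoint, since distinct sequences of partial diagonal monomorphisms send $A^\omega$ to distinct level-$k$ vertices. Hence $\sum_{|\omega|=k}A^\omega$ is a direct sum of rank $rs^k$, and with $s\geq 2$ this is unbounded in $k$, contradicting finite generation.

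For (i)(b), I would use the self-similar embedding $\alpha=(\alpha_1,\dots,\alpha_m)\sigma\mapsto\tilde\alpha=(\tilde\alpha_1,\dots,\tilde\alpha_m,e)\sigma$, with $\sigma$ viewed as a permutation of $\{1,\dots,m+1\}$ fixing $m+1$; this realizes $A$ as $\tilde A\leq\mathcal{A}_{m+1}$ of orbit-type $(m,1)$. Inside $\Delta(\tilde A)$ the candidate subgroup is $H=\sum_{k\geq 0}x_2^k(\tilde A)$. Self-similarity is immediate: the states of $\sum x_2^k(\tilde a_k)$ at positions $\leq m$ come only from the $k=0$ term and lie in $\tilde A\subseteq H$, while the state at $m+1$ is $\sum_{k\geq 1}x_2^{k-1}(\tilde a_k)\in H$. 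For directness, the crucial fact is that every element of $\tilde A$ has state $e$ at $m+1$; given $\sum x_2^k(\tilde a_k)=e$, the level-$1$ activity together with the states at positions $\leq m$ forces $\tilde a_0=e$, and the residual relation at $m+1$ has the same form, so iteration gives all $\tilde a_k=e$.

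For (ii), I would take $A_0=D_m(1)=\langle a\rangle$ for some $m\geq 2$ and set $A:=\Delta(\tilde{A_0})\leq\mathcal{A}_{m+1}$; this is $\Delta$-invariant by construction and by Theorem A is self-similar abelian of orbit-type $(m,1)$. The crucial identity is $\tilde a^m=(\tilde a,\dots,\tilde a,e)=x_1(\tilde a)$, so $x_1$ acts as multiplication by $m$ on $\tilde{A_0}$; consequently $\tilde a^{x_1\omega}=m\tilde a^{\omega}$ for every $\omega\in\Delta$, and every element of $A$ reduces to a $\mathbb{Z}$-combination of $\mathcal{B}=\{\tilde a^{\omega'}:\omega'\in\{e\}\cup x_2\Delta\}$. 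I would then show that $\mathcal{B}$ is a $\mathbb{Z}$-basis by induction on $\max|\omega'_i|$ in a putative relation $\sum n_i\tilde a^{\omega'_i}=e$: the level-$1$ activity is supplied only by $\omega'_0=e$ and equals $(1\,2\,\cdots\,m)^{n_0}$, so $m\mid n_0$; rewriting $n_0\tilde a=n'_0 x_1(\tilde a)$, the level-$1$ states at positions $\leq m$ and at $m+1$ yield two sub-relations whose words $\omega''_i$ arise from the $\omega'_i$ by deletion of the outermost letter, and the inductive hypothesis applies once one checks that each $\omega''_i$ remains in $\{e\}\cup x_2\Delta$. This closure property is the principal obstacle and is precisely why $\mathcal{B}$ is taken to consist of words whose \emph{innermost} letter is $x_2$ (or the empty word): this condition is stable under deletion of the outermost letter. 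Since $\mathcal{B}$ is countably infinite, $A$ is free abelian of countable rank, and as any two free abelian groups of countable rank are isomorphic, this realizes the desired representation.
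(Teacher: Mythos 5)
Your proposal is correct, but it reaches Theorem C by a more hands-on route than the paper, which runs all three parts through the virtual-endomorphism (data) formalism. For (i)(a) the paper does not count ranks: it applies Proposition \ref{sr} to turn finite generation of $K=\Delta(A)=\Delta(K)$ into a strongly recurrent $K$-data, and abelianness then forces each $H_i\cap\bigcap_{j\neq i}\ker f_j$ to be isomorphic to $K$ and of finite index, so $\bigcap_i\ker f_i\neq 1$, contradicting triviality of the $\mathbf{F}$-core; your disjoint-support argument, exhibiting $\bigoplus_{|\omega|=k}A^{\omega}$ of rank $rs^{k}$ inside $\Delta(A)$, is more elementary, does not even use self-similarity of $A$, and yields the stronger conclusion that $\Delta(A)$ has infinite torsion-free rank (both arguments tacitly assume $A\neq 1$). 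For (i)(b) the paper builds an $A^{\omega}$-data $\bigl((m,1),(H_1,H_2),(f_1,f_2)\bigr)$ with $f_2$ the shift, checks the $\mathbf{F}$-core is trivial, and concludes $A_1^{\varphi}\leq (A^{\omega})^{\varphi}\leq\Delta(A_1^{\varphi})$; your embedding $\alpha\mapsto\tilde\alpha$ of $\mathcal{A}_m$ into $\mathcal{A}_{m+1}$ together with $H=\bigoplus_{k\geq 0}\tilde A^{x_2^{k}}$ produces essentially the same group, with directness and state-closure verified inside the tree rather than inherited from faithfulness of the abstract representation — your observation that every element of $\tilde A$ has trivial state at $m+1$ is exactly the point that makes both checks go through. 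For (ii) your group $\Delta(\tilde A_0)$, with $A_0$ the $m$-ary adding machine, is precisely the paper's example $\langle\alpha_1,\alpha_2,\dots\rangle$ drawn in the introduction; the paper obtains $\Delta$-invariance and freeness simultaneously by writing down a strongly recurrent data on the abstract free abelian group of countable rank and invoking Proposition \ref{sr}, whereas you must prove linear independence of $\mathcal{B}=\{\tilde a^{\omega'}\mid \omega'\in\{e\}\cup x_2\Delta\}$ by hand; your induction (activity gives $m\mid n_0$, rewrite $n_0\tilde a$ as $n_0'\tilde a^{x_1}$, split into the two level-one sub-relations at positions $\leq m$ and at $m+1$, and use that $\{e\}\cup x_2\Delta$ is closed under deletion of the last-applied letter) is sound, with the base case resting on $\tilde a$ having infinite order. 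In sum, the paper's data-theoretic treatment is uniform across the theorem and makes freeness in (ii) automatic, while your treatment is more elementary and explicit, at the cost of carrying out that independence computation yourself.
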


To illustrate the 
second item of the theorem, let $A$ be the subgroup of $\mathcal{A}_{m+1}$ generated by
$$\alpha_1=(e, \dots, e,\alpha_{1},e)(1\,2\cdots m)(m+1),$$ $$  \alpha_{2i-1} = {\alpha_i}^{x_1} \left(i\geq 2\right)\,,\,\alpha_{2i}={\alpha_i}^{x_2} \, (i\geq 1).$$ 
Then $A$ is a self-similar free abelian group of enumerable infinite rank invariant under 
$\Delta=\langle x_1,x_2\rangle$ and has the following automata diagram.

 \begin{center}
	\begin{tikzpicture}[shorten >=2pt,node distance=2.5cm,on grid,auto]

		\node[state] (e) [] {$e$};
		\node[state] (a1) [right of=e] {$\alpha_1$};
		\node[state] (a2) [right of=a1] {$\alpha_2$};
		\node[state] (a3) [above right of=a2] {$\alpha_3$};
		\node[state] (a4) [below right of=a2] {$\alpha_4$};
		\node (i1) [above right of=a3] {$\cdots$};
		\node  (i2) [ right of=a4] {$\cdots$};
		\node (i3) [right of=a3] {$\cdots$};
		\node  (i4) [below right of=a4] {$\cdots$};

		\path [->]  (e) edge  [loop left]    node {$ $} (e)
		(a1) edge  [loop below]    node {$ $} (a1)
		(a1) edge              node {$ $} (e)
		(a2) edge              node {$ $} (a1)
		(a4) edge              node {$ $} (a2)
		(a3) edge              node {$ $} (a2)
		(i1) edge              node {$ $} (a3)
		(i3) edge              node {$ $} (a3)
		(i2) edge              node {$ $} (a4)
		(i4) edge              node {$ $} (a4)
		(a2) edge       [bend right=40]          node {} (e)
		(a3) edge       [bend right=40]          node {} (e)
		(a1) edge       [bend left=40]          node {} (e)
		(a4) edge       [bend left=40]          node {} (e);

	\end{tikzpicture}
	
	Diagram of free abelian $A$
\end{center}

We note that in the transitive case, the first construction of a free abelian self-similar group of infinite enumerable rank was given in \cite{BarSid}.

In the final section, we study self-similar cyclic groups of $\mathcal{A}_m$ and the variety of possible structures of their centralizers when $m=4$.

\section{Preliminaries}

\subsection{Self-similar groups}
Let $Y=\{1,\dots,m\}$ be a finite alphabet with $m\geq 2$ letters. The monoid of finite words $\hat{Y}$ over $Y$ has a structure
of a rooted $m$-ary tree, denoted by $\mathcal{T}(Y)$ or $\mathcal{T}_m$. The incidence relation on $\mathcal{T}_m$ is given by: $(u,v)$ is an edge if and only if there exists a letter $y$ such that $v=uy$. The empty
word $\emptyset$ is the root of the tree and the level $i$ is the set of all  words of length $i$. 

\noindent The automorphism group $\mathcal{A}_{m}$ (or ${Aut}\left( \mathcal{T}_m\right) $)
of $\mathcal{T}_{m}$ is isomorphic to the restricted wreath product
recursively defined as $\mathcal{A}_{m}=\mathcal{A}_{m}\wr Sym(Y)$. An automorphism $\alpha $ of $%
\mathcal{T}_{m}$ has the form $\alpha =(\alpha _{1},\dots,\alpha _{m})\sigma
(\alpha )$, where the state $\alpha _{i}$ belongs to $\mathcal{A}_{m}$ and $\sigma :\mathcal{A}_{m}\rightarrow Sym(Y)$ is the permutational
representation of $\mathcal{A}_{m}$ on $Y$, seen as first level of the tree $\mathcal{T}_{m}$.
The action of $\alpha =(\alpha _{1},\dots,\alpha _{m})\sigma
(\alpha ) \in \mathcal{A}_m$ on a word $y_{1}y_{2}\cdots y_{n}$ of length $n$ is given recursivelly by  $\left( y_{1}\right) ^{\sigma (\alpha
	)}\left( y_{2}\cdots y_{n}\right) ^{\alpha _{y_{1}}}$.

\noindent The recursively defined subset of $\mathcal{A}_m$
\begin{equation*}
	Q(\alpha )=\{\alpha\}\cup
	_{i=1}^{m}Q(\alpha _{i})
\end{equation*}%
is called the set of \textit{states} of $\alpha $. A subgroup 
$G$ of $\mathcal{A}_{m}$ is \textit{self-similar} (or \textit{state-closed}, or \textit{functionally recursive}) if $Q(\alpha )$ is a subset of $G$ for all $\alpha $ in $G$ and is \textit{transitive} if its action on $Y$ is transitive. 

\subsection{Self-similarity data}
A \textit{virtual endomorphism} of an abstract group $G$ is a homomorphism $f:H \to G$ from a subgroup $H$ of finite index in $G$. Let $G$ be a group and consider 
\begin{equation*}
	\mathbf{H=} \left( H_{i} \leqslant G \mid \left[ G:H_{i}\right] =m_{i}\text{ }%
	\left( \text{ }1\leq i\leq s\right) \right) \text{,}
\end{equation*}%
\begin{equation*}
	\mathbf{m=}\left( m_{1},\dots,m_{s}\right) ,\text{ }m=m_{1}+\dots+m_{s},
\end{equation*}
\begin{equation*}
	\mathbf{F}=\left( f_{i}:H_{i}\rightarrow G \,\, \text{virtual endomorphisms}\mid 1\leq i\leq s\right) \text{;
	}
\end{equation*}
we will call  $\left( \mathbf{m},\mathbf{H,F}\right) $ a $G$-\textit{data} or \textit{data for} $G$. The $\mathbf{F}$-\textit{core} is the largest subgroup of $ \cap _{i=1}^{s}H_{i}$ which is normal in $G$ and $f_i$ - invariant for all $i=1,\dots,s$


The following approach to produce intransitive self-similar groups was given in \cite{DSS}.

\begin{proposition} Given a group $G$, $m\geq 2$ and a $G$-data $\left( 
	\mathbf{m},\mathbf{H,F}\right) $. Then the data provides a self-similar
	representation of $G$ on the $m$-tree with kernel the $\mathbf{F}$-core, 
	\begin{equation*}
		\langle K\leqslant \cap _{i=1}^{s}H_{i}\mid K\vartriangleleft
		G,K^{f_{i}}\leqslant K,\forall i=1,\dots,s\rangle \text{.}
	\end{equation*}
\end{proposition}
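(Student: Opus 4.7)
The plan is to construct the representation $\varphi\colon G\to\mathcal{A}_m$ directly from the data by the standard coset-plus-virtual-endomorphism recipe, and then identify its kernel with the $\mathbf{F}$-core.

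First, I would fix a transversal $T_i=\{t_{i,1},\dots,t_{i,m_i}\}$ of $H_i$ in $G$ for each $i=1,\dots,s$, with $t_{i,1}=e$, and identify the first level of $\mathcal{T}_m$ with the disjoint union $O_{(1)}\sqcup\cdots\sqcup O_{(s)}$ where $O_{(i)}$ is in bijection with $T_i$. For $g\in G$ and each $(i,j)$, the equation $t_{i,j}g = h_{i,j}\,t_{i,\sigma_i(j)}$ with $h_{i,j}\in H_i$ determines uniquely a permutation $\sigma_i\in Sym(O_{(i)})$ and an element $h_{i,j}$. I then define $\varphi(g)$ recursively by
$$\varphi(g)=\bigl(\varphi(f_1(h_{1,1})),\dots,\varphi(f_1(h_{1,m_1})),\dots,\varphi(f_s(h_{s,m_s}))\bigr)\,\sigma_1\cdots\sigma_s,$$
unfolding level by level. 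This gives a well-defined element of $\mathcal{A}_m$ because at each level the state data is specified directly from the $G$-data, with no choice to make.

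Next, I would verify that $\varphi$ is a homomorphism by induction on tree levels. The key identity is the cocycle relation coming from the transversals: writing $h_{i,j}(g)$ for the element produced by $g$ at position $(i,j)$, one obtains $h_{i,j}(g_1g_2)=h_{i,j}(g_1)\cdot h_{i,\sigma_i^{(g_1)}(j)}(g_2)$, so the wreath product multiplication reduces the level-$(n+1)$ equality $\varphi(g_1g_2)=\varphi(g_1)\varphi(g_2)$ to a level-$n$ statement of the same form, using that each $f_i$ is a homomorphism. Self-similarity of $\varphi(G)$ is automatic since every state of $\varphi(g)$ is, by construction, again of the form $\varphi(f_i(h))\in\varphi(G)$.

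For the kernel, let $N=\ker\varphi$. If $g\in N$ then its first-level activity is trivial on every $O_{(i)}$, so $g$ fixes every coset $H_it_{i,j}$; taking $j=1$ gives $g\in H_i$ for all $i$, hence $N\subseteq\cap_{i=1}^s H_i$. Normality of $N$ is automatic, and the recursive formula forces $\varphi(f_i(g))=e$, that is $f_i(N)\leqslant N$ for every $i$. Thus $N$ is contained in the $\mathbf{F}$-core. Conversely, any normal subgroup $K\leqslant\cap_i H_i$ satisfying $K^{f_i}\leqslant K$ acts trivially on the first level (because $t_{i,j}gt_{i,j}^{-1}\in K\subseteq H_i$ for all $g\in K$, so $\sigma_i$ is trivial and each $h_{i,j}(g)$ is a conjugate of $g$ lying in $K$), and its states $f_i(h_{i,j}(g))$ lie again in $K$; a straightforward level-by-level induction then yields $\varphi(K)=\{e\}$, so $K\leqslant N$. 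Combining the two inclusions gives the stated description of the kernel. The main obstacle I anticipate is notational rather than conceptual: keeping the bookkeeping of the $s$ orbits, the $m_i$ cosets within each, and the cocycle identity consistent enough to verify the homomorphism property cleanly; once this is laid out the rest is a direct adaptation of the classical transitive ($s=1$) case.
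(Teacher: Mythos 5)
Your construction is correct: it is the standard coset-transversal realization of a self-similar representation from virtual endomorphisms, and the paper itself states this proposition without proof, quoting it from \cite{DSS}, where exactly this recursive coset/state construction and kernel identification are used. The cocycle identity, the reduction of the kernel to $\cap_{i=1}^{s}H_i$ via the cosets $H_it_{i,1}=H_i$, and the two inclusions identifying $\ker\varphi$ with the $\mathbf{F}$-core are all as intended, so there is nothing to add beyond the bookkeeping you already acknowledge.
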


A $G$-data $(\bf{m}, \bf{H}, \bf{F})$ is said to be \emph{recurrent} if each virtual endomorphism $f_{i}$ is an epimorphism and the $\bf{F}$-core is trivial.
We say that a recurrent $G$-data $(\bf{m}, \bf{H}, \bf{F})$ is \emph{strongly recurrent} if
$$f_{i}: H_{i} \cap \bigcap_{j \neq i} ker(f_{j}) \rightarrow G$$
is an epimorphism for any $f_{i}$ in ${\bf F} = (f_{1}, \dots, f_{s})$.

There are recurrent $G$-data $(\bf{m}, \bf{H}, \bf{F})$ which are not strongly recurrent. Indeed, consider $A$ the group generated by the double-adding machine $a=(e,a,e,a)(1\,2)(3\,4)$\footnote{This automorphism was first considered in Section 2.1 of \cite{BerSid}. Its generalization for the $m$-tree is given here in 
	Subsection 5.1.1.}, then $Fix_A(1)=Fix_A(3)=\langle a^2\rangle$ and the $A$-data $\left((2,2),(Fix_A(1),Fix_A(3)),(\pi_1,\pi_3)\right)$ is recurrent but it is not strongly recurrent.

\begin{center}
	\begin{tikzpicture}[shorten >=0.8pt,node distance=2.2cm,on grid,auto] 
		
		\node[state] (a)  {$a$};
		\node[state] (e) [right of=a] {$e$};
		
		\path [->]
		(a) edge              node {$1|2,3|4$} (e)
		(a) edge    [loop left]        node {$2|1,4|3$} (b)
		(e) edge      [loop right]   node {$1|1,2|2,3|3,4|4$} (e);
		
	\end{tikzpicture}
\end{center}

\begin{center}
	Diagram of the double adding machine $a$
\end{center}

\section{Operators acting on automorphisms of ${m}$-trees}

\noindent 

The fact that partial monomorphisms play an important role here, we approach the subject more generally. Define
$$ Func(m)=\{f:\mathcal{A}_m \to \mathcal{A}_m \text{   function}\},$$ $$End(m)=\{f:\mathcal{A}_m \to \mathcal{A}_m \text{   endomorphism}\},$$
$$ Mon(m)=\{f:\mathcal{A}_m \to \mathcal{A}_m \text{   monomorphism}\}.$$

As usual, $Func(m)$ is closed under two binary operations: the composition of functions ``$\mathbf{\cdot}$" defined by
$$(a)(f\mathbf{\cdot}g)=((a)f)g$$
and the non-commutative `` $\mathbf{+}$ ''  defined by 
$$(a)(f+g)=((a)f)((a)g),$$
for all $f,g \in Fun(m)$ and $a\in \mathcal{A}_m.$

$End(m)$ is closed under ``$\mathbf{\cdot}$" and $Mon(m)$ is a submonoid of $End(m)$. We note that for $f,g,h \in Func(m),$
$$h\cdot(f+g)=h\cdot f+h\cdot g, $$
and
$$(f+g)\cdot h =f\cdot h +g\cdot h$$
provided   $h \in End(m).$

\begin{lemma}\label{end}
	Let $f,g \in End(m)$ and let $h=f+g$.
	\begin{enumerate}
		\item [(i)] Then, $h\in End(m)$ iff $({\mathcal{A}_m})^f$ commutes element-wise with $({\mathcal{A}_m})^g.$
		\item[(ii)] Suppose $h \in End(m)$. Then $h \in Mon(m)$ iff 
		${\mathcal{A}_m}^f \cap {\mathcal{A}_m}^g =\{e\}.$ 
	\end{enumerate}
\end{lemma}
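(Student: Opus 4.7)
The approach for (i) is a direct computation. I will expand $(ab)(f+g)$ in two ways: using the definition of ``$+$'' together with $f, g \in End(m)$ gives $(a)f(b)f(a)g(b)g$, while using the homomorphism property of $h = f+g$ gives $(a)f(a)g(b)f(b)g$. Cancelling $(a)f$ on the left and $(b)g$ on the right, the two expressions agree for all $a, b$ if and only if $(b)f \cdot (a)g = (a)g \cdot (b)f$ for all $a, b \in \mathcal{A}_m$, which is exactly element-wise commutation of $\mathcal{A}_m^f$ and $\mathcal{A}_m^g$. Both implications in (i) come from reading this calculation forwards or backwards.

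For (ii), my starting point is the identity $\ker h = \{c \in \mathcal{A}_m : (c)f = ((c)g)^{-1}\}$, which shows that for every $c \in \ker h$, the common value $(c)f = ((c)g)^{-1}$ lies in $\mathcal{A}_m^f \cap \mathcal{A}_m^g$. The direction ``intersection trivial $\Rightarrow$ $h \in Mon(m)$'' follows at once: we get $(c)f = (c)g = e$, and under the natural hypothesis that $f, g$ are monomorphisms (the case of the partial diagonals $x_i$ used throughout the rest of the paper) this gives $c = e$.

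For the converse, I would use the commutation established in part (i) to argue that any $x \in \mathcal{A}_m^f \cap \mathcal{A}_m^g$ lies in $Z(\mathcal{A}_m^f) \cap Z(\mathcal{A}_m^g)$: indeed $x \in \mathcal{A}_m^f$ is centralized by all of $\mathcal{A}_m^g$, while $x$ itself belongs to $\mathcal{A}_m^g$. Since $\mathcal{A}_m^f \cong \mathcal{A}_m / \ker f$ (and similarly for $g$) and $\mathcal{A}_m$ has trivial centre, this forces $x = e$ whenever at least one of $f, g$ is a monomorphism, yielding the desired triviality of the intersection.

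The main obstacle I foresee is the $(\Rightarrow)$ direction of (ii): a non-trivial intersection does not automatically produce a non-trivial element of $\ker h$ without further control on $\ker f$ and $\ker g$. My plan is therefore to carry (ii) through under the convention that $f, g \in Mon(m)$, where the trivial-centre property of $\mathcal{A}_m$ closes the loop cleanly; this is precisely the setting in which the lemma is applied in the rest of the paper, notably to the partial diagonal monomorphisms $x_i$.
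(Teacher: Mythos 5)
Your part (i) is correct and is surely the ``straightforward'' computation the paper has in mind (it prints no proof): comparing $(ab)h=(a)f\,(b)f\,(a)g\,(b)g$ with $(a)h\,(b)h=(a)f\,(a)g\,(b)f\,(b)g$ and cancelling $(a)f$ on the left and $(b)g$ on the right reduces the homomorphism condition exactly to $(b)f\,(a)g=(a)g\,(b)f$ for all $a,b$, and both implications are read off from this identity.

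For part (ii) your hesitation is justified, and the issue is with the statement rather than with your argument: for arbitrary $f,g\in End(m)$ the implication ``$\mathcal{A}_m^f\cap\mathcal{A}_m^g=\{e\}\Rightarrow h\in Mon(m)$'' is false. For instance, with $m=2$ let $\pi$ be the endomorphism sending $\alpha$ to its activity $\sigma(\alpha)$ viewed as a rigid permutation, and put $f=\pi\cdot\delta_1$, $g=\pi\cdot\delta_2$: the two images are order-two subgroups with disjoint supports, so they commute and intersect trivially, yet $h=f+g$ kills all of $Stab_{\mathcal{A}_2}(1)$. The minimal repair is to add the hypothesis $\ker f\cap\ker g=\{e\}$ (note $\ker f\cap\ker g\leqslant\ker h$; with it, your identity $(c)f=((c)g)^{-1}$ for $c\in\ker h$ closes the backward direction), whereas your convention $f,g\in Mon(m)$ is a stronger special case --- admittedly the only one the paper ever uses, e.g.\ for sums of the $\delta_u$ over incomparable vertices. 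Two further observations. First, under that convention your converse argument proves more than you claim: the computation placing $x\in Z(\mathcal{A}_m^f)\cap Z(\mathcal{A}_m^g)$ never uses injectivity of $h$, so for monomorphisms $f,g$ with commuting images the intersection of images is \emph{automatically} trivial; both sides of the ``iff'' are then always true, so your proof is valid but the equivalence degenerates. Second, the forward direction does hold at the stated level of generality, though not straightforwardly: if both $f$ and $g$ had nontrivial kernel, these would be two nontrivial normal subgroups of $\mathcal{A}_m$, and any two such intersect nontrivially (each contains the derived subgroup of some level stabilizer), forcing $\ker h\neq\{e\}$; hence $h\in Mon(m)$ implies one of $f,g$ is injective and your trivial-centre argument then gives $\mathcal{A}_m^f\cap\mathcal{A}_m^g=\{e\}$.
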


The proofs are straightforward.

\subsection{Monomorphisms of $\mathcal{A}_m$.}
$Mon(m)$ contains the group $(\mathcal{A}_m)\kappa$ induced by conjugation of $\mathcal{A}_m$ on itself. Since $m\geq 2$, the center of $\mathcal{A}_m$ is trivial, and so, $(\mathcal{A}_m)\kappa$ is isomorphic to $\mathcal{A}_m$. The tree structure induces certain additional momomorphisms of $\mathcal{A}_m$ defined as follows.  First, a \textit{connecting set} 
$M$ of $\mathcal{T}(Y)$ is a subset of vertices of $\mathcal{T}(Y)$ such that every element of $\mathcal{T}(Y)$ is comparable to some element of $M$ and 
different elements of $M$ are incomparable.
Some examples for $Y=\{1, 2\}$ are: $M =\{\emptyset\}$; $M=Y$; $M=\{1, 21, 221,\dots , 2^i\cdot 1,\dots \}$.

Given a connecting set $M$ and $N$ a subset of $M$ (called \textit{partial connecting set}), define the monomorphism
$$\mathbf{\delta}_N :\mathcal{A}_m \to \mathcal{A}_m,$$   
$$\, \, \, \, \, \, \, \, \, \, \, \, \, \, \, \, \, \, \, \, \, \, \, \, \, \, \, \, \, \, \, \, a \mapsto (a)\mathbf{\delta}_N=b $$
where $b$ is such that $b_u =a$ for all $u \in N$ and $e$ for $u$ in $M \setminus N$. 
Then $\delta_N$ is in $Mon(\mathcal{A}_m).$  When $w \in \hat{Y}$, we simplify the notation $\delta_{\{w\}}$ to  $\delta_{w}$. Define $\delta(Y)$ to be the monoid $\langle \delta_i \,\,|\,\, i \in Y \rangle$. When $Y$ is fixed we simplify  $\delta(Y)$ to $\delta$ and denote the image of a subgroup $H$ of $\mathcal{A}_m$ under the action of $\delta$ by  $\delta(H)=H^{\delta}=\langle (a)\cdot w  \text{   } | \text{   }  a \in H\text{   and   } w \in \delta \rangle$.

\subsection{The monoid $\delta(Y)$}

\begin{lemma}
	For all $u,v \in \hat{Y},$ 
	$\delta_u\delta_v=\delta_{vu}$
	and therefore, 
	$$\delta=\langle \delta_i  \text{   } | \text{   }  i \in \hat{Y}\rangle$$
	is a free monoid of rank $m$.
	
\end{lemma}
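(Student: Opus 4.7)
The plan is to establish the composition identity $\delta_u \cdot \delta_v = \delta_{vu}$ by a direct computation on $\mathcal{A}_m$, and then deduce freeness as a corollary of the fact that distinct words $w\in\hat{Y}$ produce distinct monomorphisms $\delta_w$.

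First I would fix $a\in\mathcal{A}_m$ and unwind both sides against the definition. By the definition of $\delta_N$ applied with $N=\{u\}$, the automorphism $b=(a)\delta_u$ is the unique automorphism of $\mathcal{T}_m$ whose state at the vertex $u$ equals $a$ and which is trivial at every vertex of the ambient connecting set incomparable to $u$. Applying $\delta_v$ to $b$ produces $c=(b)\delta_v$, whose state at the vertex $v$ is the whole automorphism $b$ and which is trivial at vertices of its connecting set incomparable to $v$. Reading states by concatenation of tree positions, the state of $c$ at $vu$ is exactly the state of $b$ at $u$, namely $a$, while at any vertex incomparable to $vu$ in the resulting connecting set the state is trivial. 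Comparing with the definition of $\delta_{vu}$, we conclude $c=(a)\delta_{vu}$, establishing $\delta_u\cdot\delta_v=\delta_{vu}$.

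Next, iterating this identity gives $\delta_{i_1}\delta_{i_2}\cdots \delta_{i_k} = \delta_{i_k i_{k-1}\cdots i_1}$, so every element of $\delta=\langle \delta_1,\ldots,\delta_m\rangle$ has the form $\delta_w$ for some $w\in\hat{Y}$. To conclude that $\delta$ is free of rank $m$, I would verify that the assignment $w\mapsto\delta_w$ is injective: pick any non-trivial $a\in\mathcal{A}_m$, and observe that the set of vertices at which $(a)\delta_w$ carries a non-trivial state determines $w$ uniquely, so $\delta_w=\delta_{w'}$ forces $w=w'$. The bijection $\hat{Y}\to\delta$, $w\mapsto\delta_w$, then intertwines concatenation in $\hat{Y}$ with the reversed product in $\delta$, exhibiting $\delta$ as a free monoid on the $m$ generators $\delta_1,\ldots,\delta_m$.

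There is no genuine obstacle here; the only point of care is that composition follows the convention $(a)(f\cdot g)=((a)f)g$, so $f$ is applied first, which is precisely what reverses the concatenation order in $\delta_u\delta_v=\delta_{vu}$. Once this bookkeeping is fixed, freeness reduces to the combinatorial fact that the words in $\hat{Y}$ are in bijection with the vertices of $\mathcal{T}_m$.
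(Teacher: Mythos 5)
Your proposal is correct and follows essentially the same route as the paper: a direct unwinding of the definition to verify $(a)\delta_u\delta_v=((a)\delta_u)\delta_v=(a)\delta_{vu}$, with freeness then deduced from this composition rule. The only difference is that you spell out the injectivity of $w\mapsto\delta_w$ (which the paper compresses into ``freeness follows''), and that elaboration is sound.
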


\begin{proof}
	We check
	$$ (a)\delta_u\delta_v=((a)\delta_u)\delta_v=(a)\delta_{vu},$$
	for all $a \in \mathcal{A}_m$ and freeness follows. 
\end{proof}

\begin{lemma}
	Let $U=\{u_i\,\, |\,\,  1\leq i \leq k\}$ be a set of incomparable elements of $\mathcal{T}_m$. Then, $\sum_{u \in U} \delta_u \in Mon(m)$,
	$\{\delta_{u} \,\,|\,\, u \in U\}$ is a commutative set and
	$$(\mathcal{A}_m)^{\delta_v} \cap \langle(\mathcal{A}_m)^{\delta_u}\,\, |\,\, u \in U, u\neq v\rangle = \{e\}.$$
\end{lemma}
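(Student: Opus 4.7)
The plan is to reduce all three claims to the geometric fact that, for incomparable vertices $u,v$ of $\mathcal{T}_m$, the sets of strict descendants of $u$ and of $v$ are disjoint. By the very definition of $\delta_u$, the automorphism $(a)\delta_u$ has state $a$ at the vertex $u$ and trivial state everywhere else; in particular, $(a)\delta_u$ fixes every vertex of $\mathcal{T}_m$ that is not a strict descendant of $u$, and the same holds for $(b)\delta_v$ with respect to $v$. Incomparability of $u$ and $v$ makes these ``support regions'' disjoint, so $(a)\delta_u$ and $(b)\delta_v$ commute. This proves the commutativity of the family $\{\delta_u \mid u \in U\}$, and by Lemma \ref{end}(i) together with induction on $k=|U|$, the sum $\delta_U:=\sum_{u\in U}\delta_u$ lies in $\mathrm{End}(m)$.

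To upgrade $\delta_U$ to a monomorphism, I would argue directly: if $(a)\delta_U = e$, then the commuting product $\prod_{u\in U}(a)\delta_u$ equals the identity, and restricting this product to the subtree rooted at any fixed $u_0\in U$ recovers $a$, whence $a=e$. Alternatively, one can iterate Lemma \ref{end}(ii), using the disjointness of supports established above to guarantee that the images of the partial sums intersect trivially.

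For the intersection assertion, fix $v\in U$ and take $\alpha$ in the intersection. From $\alpha \in (\mathcal{A}_m)^{\delta_v}$, $\alpha$ fixes every vertex that is not a strict descendant of $v$. On the other hand, $\alpha$ is a product of factors, each lying in some $(\mathcal{A}_m)^{\delta_u}$ with $u\ne v$; each such factor has support contained in the strict descendants of $u$, and by incomparability these are disjoint from the strict descendants of $v$. Hence every factor fixes every strict descendant of $v$, and so does $\alpha$. Combining the two, $\alpha$ fixes every vertex of $\mathcal{T}_m$ and must therefore be trivial.

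The only point requiring care is the bookkeeping of supports: one must confirm that the support of a product of automorphisms is contained in the union of the supports of its factors, so that the subgroup generated by $\{(\mathcal{A}_m)^{\delta_u} : u\in U,\; u\ne v\}$ consists entirely of automorphisms fixing every strict descendant of $v$. This is immediate, but it is the only step where one could slip if one forgot that vertices at or above the relevant connecting set are automatically fixed by the image of $\delta$.
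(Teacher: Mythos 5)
Your proposal is correct and follows essentially the same route the paper intends: the paper simply says the argument is the same as for Lemma \ref{end}, namely the observation that incomparable vertices have disjoint sets of descendants, so the images of the $\delta_u$'s have disjoint supports, which yields commutativity, injectivity of the sum, and the trivial intersection exactly as you spell out. Your extra care about supports of products being contained in unions of supports is precisely the "straightforward" bookkeeping the paper leaves implicit, so no gap remains.
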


\begin{proof}

	The argument is as that of Lemma \ref{end}.
\end{proof}

\begin{lemma}\label{act}
	(Permutability Relations) Let $r \in \mathcal{A}_m$ and $w \in \mathcal{T}_m.$ Then 
	$$\delta_w\cdot (r)\kappa=(r_w)\kappa \cdot \delta_{(w)r} $$
	and therefore the following monoid factorization holds,
	$$\langle (\mathcal{A}_m)\kappa, \delta(Y) \rangle =(\mathcal{A}_m)\kappa \cdot \delta(Y). $$
\end{lemma}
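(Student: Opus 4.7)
The plan is to verify the permutability relation pointwise on an arbitrary $a\in\mathcal{A}_m$, and then to deduce the factorization as a direct monoid-theoretic consequence of that relation. Both hand sides of the claimed identity are functions on $\mathcal{A}_m$, so it suffices to compute $(a)(\delta_w\cdot(r)\kappa)$ and $(a)((r_w)\kappa\cdot\delta_{(w)r})$ and check they coincide.

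First, I would unwind the left-hand side: $(a)(\delta_w\cdot(r)\kappa)=r^{-1}\bigl((a)\delta_w\bigr)r$. Write $b=(a)\delta_w$, so $b$ acts trivially on every vertex of $\hat{Y}$ that is not a descendant of $w$, while on the subtree $T_w$ rooted at $w$ it acts as $a$ under the canonical identification $T_w\cong\mathcal{T}_m$. The key geometric fact I would invoke is that $r$ is a tree automorphism sending $T_w$ bijectively onto $T_{(w)r}$, and the induced isomorphism $T_w\to T_{(w)r}$ is, after identifying both subtrees with $\mathcal{T}_m$, precisely the state $r_w\in\mathcal{A}_m$. Consequently $r^{-1}br$ is trivial off $T_{(w)r}$ and, on $T_{(w)r}$, acts as the conjugate of $a$ by $r_w$, i.e.\ as $a^{r_w}=r_w^{-1}ar_w$.

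Therefore $r^{-1}\bigl((a)\delta_w\bigr)r=(a^{r_w})\delta_{(w)r}$. On the other hand, $(a)\bigl((r_w)\kappa\cdot\delta_{(w)r}\bigr)=\bigl(r_w^{-1}ar_w\bigr)\delta_{(w)r}=(a^{r_w})\delta_{(w)r}$, matching the left-hand side. Since $a$ was arbitrary the identity $\delta_w\cdot(r)\kappa=(r_w)\kappa\cdot\delta_{(w)r}$ holds in $Func(m)$, and both sides in fact lie in $Mon(m)$.

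For the factorization statement, a general element of $\langle(\mathcal{A}_m)\kappa,\delta(Y)\rangle$ is an alternating product of blocks from $(\mathcal{A}_m)\kappa$ and from $\delta(Y)=\langle\delta_i\mid i\in Y\rangle$. Applying the permutability relation one generator at a time lets one slide a factor $(r)\kappa$ past a single $\delta_w$ at the cost of replacing them with $(r_w)\kappa$ and $\delta_{(w)r}$; iterating along a word $\delta_{w_1}\cdots\delta_{w_k}$ moves the conjugation factor all the way to the left. A straightforward induction on the number of blocks then shows every element can be rewritten in the normal form $(g)\kappa\cdot \omega$ with $g\in\mathcal{A}_m$ and $\omega\in\delta(Y)$, giving the claimed product decomposition.

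The only genuinely delicate point is the second step, namely the correct identification of the isomorphism $T_w\to T_{(w)r}$ induced by $r$ with the state $r_w$ acting on $\mathcal{T}_m$; everything else reduces to tracking definitions and a one-line induction.
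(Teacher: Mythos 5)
Your proof is correct and essentially the same as the paper's: the paper verifies the relation pointwise by developing $r$ at level $k=|w|$ as $r=(r_u\mid |u|=k)\sigma$ and conjugating the element $(a)\delta_w$ supported at the coordinate $w$, which is exactly your subtree argument written in wreath-product coordinates (your identification of the map $T_w\to T_{(w)r}$ with the state $r_w$ is just the recursive definition of the action). The monoid factorization, which the paper leaves as an immediate consequence, is handled by your standard sliding induction, so there is no gap.
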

\begin{proof}
	Let $|w|=k$. The development of $r$ at the $k$-th level of the tree is $r=(r_u \text{     }| \text{     } |u|=k)\sigma$ for some permutation $\sigma$ of the $k$-th level. Then as
	$$(a)(\delta_w)=(e,\dots,e,a,e,\dots,e) $$
	an element of $Stab_{\mathcal{A}_{m}}(k)$ and has $a$ in the $w$-th coordinate, we compute:
	
	\begin{equation*}
		\begin{split}
			(a)(\delta_w)\cdot (r)\kappa &= {((a)\delta_w)}^r\\
			&= {(e,\dots,e,a,e,\dots,e)}^r\\
			&= {(e,\dots,{a}^{r_w},e,\dots,e)}^{\sigma} \\
			&= (e,\dots,e,a^{r_w},e,\dots,e),
		\end{split}
	\end{equation*}
	where in the last equation $a^{r_w}$ is in the $(w)^{\sigma}=(w)r$ coordinate. Thus, 
	$$(a)(\delta_w)\cdot (r) \kappa =(a^{r_w})(\delta_{(w)r})=(a)((r_w)\kappa\cdot (\delta_{(w)r})),$$
	$$\delta_w\cdot (r)\kappa =(r_w)\kappa \cdot (\delta_{(w)r}). $$
\end{proof}

\subsection{Partial diagonal monomorphisms in $\delta(Y)$}

Let $\pi=\{Y_j \text{   } | \text{   } j=1,\dots,s\}$ be a partition of $Y$.
Define $\delta(\pi)=\langle\delta_N \text{   } | \text{   } N \in \pi \rangle.$

In the particular case where $\pi$ is the set of orbits $\{O_{(1)},\dots,O_{(s)}\}$ of $G \leqslant \mathcal{A}_m$ on $Y$, we denote the partial diagonal monomorphisms associated to $G$ and its set of orbits by
$$x_1:=\lambda_{O_{(1)}},\dots,x_s:=\lambda_{O_{(s)}},$$
the monoid  generated by $x_1,\dots,x_s$ by $\Delta$ and denote the group 
$\langle G\cdot w | w \in {\Delta}\rangle $ by $\Delta_{}(G)$ or by $G^{\Delta_{}}$. 

Note that $\Delta(G)$ is a subgroup of $L(P(G))$, the layer-closure of $P(G)$ in $\mathcal{A}_m$, which is the subgroup formed by all automorphisms in $\mathcal{A}_m$ whose states have activities belonging to $P(G)$. 

Let us denote the elements of the orbit $O_{(i)}$ by $i1,i2,\dots, im_i.$

\begin{lemma}
	Let $r \in \mathcal{A}_m$ and $x_i$ in $\Delta$. 
	\begin{enumerate}
		\item [(i)] Then
		$$x_i\cdot (r)\kappa=[(r_{i1})\kappa \cdot (\delta_{(i1)r})] \cdot [(r_{i2})\kappa \cdot (\delta_{(i2)r})]\cdots [(r_{im_i})\kappa \cdot (\delta_{(im_i)r})].$$
		\item[(ii)] If  the states $r_{i1},\dots,r_{im_i}$ of $r$ centralize an element $a$ of $\mathcal{A}_m$, then 
		$$(a)x_i \cdot (r)\kappa=(a)x_l,$$ where $O_{(l)}=O_{((i)r)}$.
	\end{enumerate}
\end{lemma}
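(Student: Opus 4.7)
The plan is to derive both parts from the elementary identity
$$x_i = \delta_{i1} + \delta_{i2} + \cdots + \delta_{im_i},$$
which holds because $(a)x_i$ is, by construction, the element of $\mathcal{A}_m$ carrying $a$ in every coordinate of $O_{(i)}$ and $e$ elsewhere, and this element is the product in $\mathcal{A}_m$ of the single-coordinate tuples $(a)\delta_{i1},\dots,(a)\delta_{im_i}$. Once this decomposition is in place, the two assertions will follow directly from the permutability relation proved in Lemma~\ref{act}.

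For part (i), since $(r)\kappa \in End(m)$, the right-distributivity law $(f+g)\cdot h = f\cdot h + g\cdot h$ recorded in the preliminaries applies and yields
$$x_i \cdot (r)\kappa \;=\; \sum_{j=1}^{m_i} \delta_{ij}\cdot (r)\kappa.$$
Lemma~\ref{act} then rewrites each summand as $(r_{ij})\kappa \cdot \delta_{(ij)r}$. Because $r$ permutes $Y$ bijectively, the target coordinates $(i1)r,(i2)r,\dots,(im_i)r$ are pairwise distinct; evaluating any one of these summands at an arbitrary $a \in \mathcal{A}_m$ therefore produces an element of $\mathcal{A}_m$ whose support is a single, distinct vertex. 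The evaluated summands thus commute pairwise, and their ``$+$''-combination coincides with the ordinary product in $\mathcal{A}_m$ displayed on the right-hand side of (i).

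Part (ii) is obtained by specialization. The hypothesis that every $r_{ij}$ centralizes $a$ means $a^{r_{ij}}=a$, so the $j$-th factor evaluated at $a$ collapses to $(a)\delta_{(ij)r}$, namely the tuple carrying $a$ in coordinate $(ij)r$ and $e$ elsewhere. The product of these $m_i$ factors is then the tuple with $a$ in precisely the positions $\{(ij)r : j = 1,\dots,m_i\} = (O_{(i)})r$; under the running assumption that this image is the orbit $O_{(l)}$, the resulting element is exactly $(a)x_l$ by definition of the partial diagonal monomorphism.

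The computation is largely bookkeeping, and no step is genuinely difficult; the only conceptual move is the passage from ``$+$'' of endomorphisms to an ordinary product in $\mathcal{A}_m$, which is automatic once one notes that the evaluated summands have pairwise disjoint supports. Everything else is direct substitution into Lemmas~\ref{end} and~\ref{act}.
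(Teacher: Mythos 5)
Your proof is correct and takes essentially the same route as the paper: both decompose $x_i$ as the sum of the $\delta_{ij}$ over the orbit $O_{(i)}$, push the conjugation $(r)\kappa$ through (legitimate since it is an endomorphism), apply the permutability relation of Lemma \ref{act} to each factor, and obtain (ii) by specializing $a^{r_{ij}}=a$. The only difference is cosmetic: the paper evaluates at a generic $a\in\mathcal{A}_m$ and distributes conjugation over the product of the evaluated elements, whereas you express the same step as the right-distributivity law $(f+g)\cdot h=f\cdot h+g\cdot h$ at the operator level.
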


\begin{proof}
	Let $a \in \mathcal{A}_m$. Then 
	
	\begin{equation*}
		\begin{split}
			(a)x_i \cdot (r)\kappa &= [(a)\delta_{i1}(a) \delta_{i2}\cdots(a)\delta_{im_i}]^{r}\\
			&= [(a)\delta_{i1}]^r[(a)\delta_{i2}]^r\cdots[(a)\delta_{im_i}]^{r}\\
			&= [(a^{r_{i1}})\delta_{(i1)r}][(a^{r_{i2}})\delta_{(i2)r}]\cdots [(a^{r_{im_i}})\delta_{(im_i)r}]. 
		\end{split}
	\end{equation*}
	In the last equality we used Lemma \ref{act}. 
	
	\noindent Item (ii) is consequence of (i). 
\end{proof}

\subsection{Some relations between the operations, $\Delta$, $Stab$,  ${\text{}}^{\overline{\,\,\,}}$ and $B$.}

Let $A$ be an abelian self-similar subgroup of $\mathcal{A}_m$.
\begin{lemma}
	Let $x_i \in \Delta$ and $k\geq 1$. Then, $Stab_{A^{x_i}}(k)= {\left( Stab_{A}(k-1) \right)}^{x_i}$.
	
\end{lemma}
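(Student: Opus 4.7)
The plan is to reduce the identity to the single, purely tree-theoretic observation that $x_i$ shifts the level filtration by one. Since $x_i$ maps $\mathcal{A}_m$ into $\mathrm{Stab}_{\mathcal{A}_m}(1)$, every element $(a)x_i$ of $A^{x_i}$ automatically fixes the first level, so for $k\geq 1$ the containment in $\mathrm{Stab}(k)$ is controlled by what happens in the subtrees rooted at level one. By the definition of $x_i$, the state of $(a)x_i$ at the vertex labelled $j\in Y$ equals $a$ if $j\in O_{(i)}$ and equals $e$ otherwise. Therefore the action of $(a)x_i$ on a length-$k$ word $jw$ is trivial whenever $j\notin O_{(i)}$, and equals the action of $a$ on the length-$(k-1)$ word $w$ whenever $j\in O_{(i)}$.

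From this description I would extract the key equivalence
\[
 (a)x_i\in\mathrm{Stab}_{\mathcal{A}_m}(k)\ \Longleftrightarrow\ a\in\mathrm{Stab}_{\mathcal{A}_m}(k-1),
\]
valid for every $a\in\mathcal{A}_m$ and every $k\geq 1$. The forward direction is immediate by testing on words beginning with a letter of $O_{(i)}$ (such letters exist since the orbit is non-empty); the reverse direction is the observation just made, since every word of length $k$ starts either in $O_{(i)}$, in which case the remaining $k-1$ letters are moved by $a$ inside a stabilized set, or outside $O_{(i)}$, in which case the action is trivial.

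Because $x_i$ is a monomorphism, the map $a\mapsto(a)x_i$ is a bijection between $A$ and $A^{x_i}$. Intersecting this bijection with the stabilizer condition gives
\[
 \mathrm{Stab}_{A^{x_i}}(k) \;=\; \{(a)x_i : a\in A,\ (a)x_i\in\mathrm{Stab}(k)\}
\;=\; \{(a)x_i : a\in \mathrm{Stab}_A(k-1)\}
\;=\; \bigl(\mathrm{Stab}_A(k-1)\bigr)^{x_i},
\]
which is the stated equality. There is no real obstacle here; the one point requiring care is keeping straight that the level index drops by one precisely because $x_i$ pushes $A$ into the first-level stabilizer, so that a length-$k$ action of $(a)x_i$ corresponds to a length-$(k-1)$ action of $a$ on each subtree rooted at an orbit coordinate.
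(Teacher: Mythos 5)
Your argument is correct and is exactly the direct verification the paper has in mind: the paper's own proof is just the word ``Straightforward,'' and your key equivalence $(a)x_i\in \mathrm{Stab}_{\mathcal{A}_m}(k)\Leftrightarrow a\in \mathrm{Stab}_{\mathcal{A}_m}(k-1)$, combined with the fact that every element of $A^{x_i}$ has the form $(a)x_i$ with $a\in A$, fills in that omitted computation cleanly (including the degenerate case $k=1$, where both sides reduce to $A^{x_i}$).
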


\begin{proof}
	Straightforward. 
\end{proof}

\begin{lemma}\label{s2}
	
	\begin{enumerate}
		\item [(i)] $\Delta(\overline{A}) \leqslant \overline{\Delta(A)};$
		\item [(ii)] for $m=2$ and $A=\langle a=(1\,2)\rangle$, the above inequality is proper.
	\end{enumerate}
\end{lemma}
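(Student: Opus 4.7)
The plan for (i) is purely topological. First I would note that $\overline{\Delta(A)}$ is a group (being the closure of a group in the topological group $\mathcal{A}_m$) containing $\Delta(A)$, so it suffices to show that every generator $(\alpha)\omega$ of $\Delta(\overline{A})$, with $\alpha \in \overline{A}$ and $\omega \in \Delta$, lies in $\overline{\Delta(A)}$. The crucial ingredient is the continuity of each partial diagonal monomorphism $x_i$ in the profinite topology, which follows from the observation that $\alpha \equiv \beta \pmod{Stab(k)}$ implies $(\alpha)x_i \equiv (\beta)x_i \pmod{Stab(k+1)}$. Hence any $\omega \in \Delta$ is continuous as a composition of continuous maps. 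Picking a sequence $\alpha_n \in A$ with $\alpha_n \to \alpha$ then yields $(\alpha_n)\omega \in \Delta(A) \subseteq \overline{\Delta(A)}$ and $(\alpha_n)\omega \to (\alpha)\omega$, placing the limit in the closed set $\overline{\Delta(A)}$.

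For (ii), the opening move is to observe that $A = \{e, a\}$ is finite, hence already closed, so $\overline{A} = A$ and $\Delta(\overline{A}) = \Delta(A)$. With $s = 1$ and $x_1(\alpha) = (\alpha,\alpha)$, the group $\Delta(A)$ is generated by the commuting involutions $g_n := (a)x_1^n$ for $n \geq 0$. A direct inspection shows that $g_n$ has nontrivial activity only at level $n$ of $\mathcal{T}_2$, so these generators are independent and $\Delta(A)$ is a countable elementary abelian $2$-group in which every individual element is supported at only finitely many levels.

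The main step, and the main obstacle, is producing an explicit witness in $\overline{\Delta(A)}\setminus\Delta(A)$. I would propose the involution $c \in \mathcal{A}_2$ defined recursively by $c = (c,c)(1\,2)$; the recursion $c^2 = (c^2,c^2)$ forces $c^2 = e$. Let $c_n$ denote the truncation of $c$ to the first $n$ levels; an inductive check yields the relation $c_{n+1} = g_n \cdot c_n$, so each $c_n$ lies in $\Delta(A)$, while $c_n \to c$ in the profinite topology. Thus $c \in \overline{\Delta(A)}$, but since $c$ has nontrivial activity at every level of $\mathcal{T}_2$ whereas elements of $\Delta(A)$ are supported on only finitely many levels, $c \notin \Delta(A) = \Delta(\overline{A})$. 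A cleaner but less explicit alternative is the cardinality observation that $\Delta(A)$ is countable whereas the infinite closed subgroup $\overline{\Delta(A)}$ of the profinite group $\mathcal{A}_2$ is uncountable.
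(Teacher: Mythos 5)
Your proof is correct and follows essentially the same route as the paper: part (i) rests on exactly the stabilizer-shifting fact $\left(Stab_A(k)\right)^{x_i}\leqslant Stab(k+1)$ (the paper's preceding lemma, which you rephrase as continuity of $x_i$ and combine with limits instead of the paper's infinite-product decomposition of $\overline{A}$), and in part (ii) your witness $c=(c,c)(1\,2)$ is precisely the element $a^{\sum_{i\geq 0}x^i}$ of $\overline{\Delta(A)}=\{a^{f(x)}\mid f\in \mathbb{Z}/2\mathbb{Z}[[x]]\}$ that is not in $\Delta(\overline{A})=\{a^{p(x)}\mid p\in \mathbb{Z}/2\mathbb{Z}[x]\}$, which is the paper's comparison of polynomials versus power series made explicit.
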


\begin{proof}

	\begin{enumerate}
		\item [(i)] The topological closure of $A$ is the infinite product
		$$\overline{A}=A\cdot Stab_{A}(1)\cdots Stab_{A}(i)\cdots$$
		Therefore, for any $w \in \Delta$,
		$${(\overline{A})}^w=
		A^w\cdot {\left(Stab_{A} (1)\right)}^w\cdots {\left(Stab_A(i)\right)}^w\cdots$$
		which, by the previous lemma, is a subset of $\overline{A^w}$.
		\item[(ii)] Since for $A=\langle(1\,2)\rangle \leqslant \mathcal{A}_2$, $\overline{A}=A$, 
		we have 
		$$\Delta (\overline{A})=\{a^{p(x)}\,|\, p(x) \in \mathbb{Z}/ 2\mathbb{Z}\,[x]\},$$
		whereas,
		$$\overline{\Delta (A)}=\{a^{f(x)}\,|\, f(x) \in \mathbb{Z}/ 2\mathbb{Z}\,[[x]]\}.$$
	\end{enumerate}

\end{proof}

\begin{lemma} \label{s1}  Let $w \in \Delta$.
	\begin{enumerate}
		\item [(i)] $B(A^w)=A^w$ if $|w|\geq 1$;
		\item[(ii)]
		$$B(\Delta(A))=B(A)\cdot\Delta(A);$$
		$$\Delta(B(\Delta(A)))
		=B(\Delta(A)).$$
	\end{enumerate}
\end{lemma}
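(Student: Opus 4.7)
The plan is to treat the three identities in order, leaning on the abelian-ness of $\Delta(A)$ established in Theorem~A together with the orbit-support structure afforded by part~(i).

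For part~(i), I would write $w=w'x_k$; by definition of $x_k$, every $\alpha\in A^w$ takes the form $\alpha=(\beta)x_k$ for some $\beta\in A^{w'}$, so $\alpha$ stabilizes the first level and is supported on $O_{(k)}$. Consequently $\alpha_{[k]}=\alpha$ and $\alpha_{[j]}=e$ for $j\neq k$, giving $(A^w)_{[k]}=A^w$ and $(A^w)_{[j]}=\{e\}$, hence $B(A^w)=A^w$. This step is routine and amounts to reading off the definition of the partial diagonal $x_k$.

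For the first identity in~(ii) my strategy is to partition the generators of $\Delta(A)$ by the rightmost letter of the word $w$. Set $\Delta_j=\langle A^w : |w|\geq 1,\ w\ \text{ends in}\ x_j\rangle$; by part~(i) each $\Delta_j\leq \mathrm{Stab}_{\mathcal{A}_m}(1)$ and is supported on $O_{(j)}$. Abelian-ness of $\Delta(A)$ then yields the commuting factorization $\Delta(A)=A\cdot\Delta_1\cdots\Delta_s$. Writing a general $\alpha=a\gamma_1\cdots\gamma_s\in\Delta(A)$, the $[k]$-projection collapses to $\alpha_{[k]}=a_{[k]}\gamma_k$, since $\gamma_j$ is trivial on $O_{(k)}$ whenever $j\neq k$. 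Thus $\Delta(A)_{[k]}=A_{[k]}\cdot\Delta_k$, and multiplying over $k$ in the direct product produces $B(\Delta(A))=B(A)\cdot\Delta_1\cdots\Delta_s=B(A)\cdot\Delta(A)$.

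For the second identity, I would begin from the first and apply the operator $\Delta$; using abelian distributivity and the fact that $\Delta(\Delta(A))=\Delta(A)$, this yields $\Delta(B(\Delta(A)))=\Delta(B(A))\cdot\Delta(A)$. The inclusion $B(\Delta(A))\subseteq \Delta(B(\Delta(A)))$ is automatic, so the remaining task is to establish $B(A)^{x_j}\subseteq B(\Delta(A))$ for each~$j$. The proposed approach is to decompose $b=\prod_i b_{[i]}\in B(A)$ and use $(b)x_j=\prod_i(b_{[i]})x_j$, then realize each factor $(b_{[i]})x_j$ as the $[j]$-projection of an element of $\Delta(A)$ modified by an appropriate term from $\Delta(A)_{[j]}=A_{[j]}\cdot\Delta_j$. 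The main obstacle lies precisely here: matching states coordinate-by-coordinate requires exhibiting a correction in $A_{[j]}$ that absorbs the $[k]$-slices with $k\neq i$ of the global element $(a)^{x_j}\in A^{x_j}$, and must use the subdirect embedding $A\hookrightarrow B(A)$ together with the self-similarity of $\Delta(A)$. Once this matching is secured, $B(\Delta(A))$ is $\Delta$-invariant and the identity follows.
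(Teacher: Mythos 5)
Your part (i) and the first identity in (ii) are correct, and your route is essentially the paper's: the paper simply applies $B$ to the generators and invokes (i), getting $B(\Delta(A))=B(A)\,\langle B(A^w):|w|\geq 1\rangle=B(A)\,\langle A^w:|w|\geq 1\rangle=B(A)\Delta(A)$; your bookkeeping via the subgroups $\Delta_j$ of words ending in $x_j$ is the same computation, with the small remark that your appeal to abelianness of $\Delta(A)$ is a forward reference to Proposition~\ref{teo1}(vi) (harmless, since that item does not depend on this lemma), while the paper's version needs nothing beyond (i) and the fact that the $[k]$-projection is a homomorphism.

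The real issue is the second identity, and you have put your finger exactly on the critical step without closing it: everything reduces to $B(A)^{x_j}\subseteq B(\Delta(A))=B(A)\Delta(A)$, i.e.\ to $(a_{[i]})^{x_j}\in B(A)\Delta(A)$, and your ``matching'' argument is only sketched. Be aware that this step cannot be secured, because the inclusion is false in general; note also that the paper's own proof is silent on this identity (its displayed chain proves only the first equality). Take the paper's double adding machine $a=(e,a,e,a)(1\,2)(3\,4)$, $A=\langle a\rangle$, with orbits $O_{(1)}=\{1,2\}$, $O_{(2)}=\{3,4\}$. Every element of $\Delta(A)$ has first-level activity in $\langle (1\,2)(3\,4)\rangle$, so $a_{[1]}=(e,a,e,e)(1\,2)\notin\Delta(A)$. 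The $[2]$-projection of $B(A)\Delta(A)$ is generated by $a_{[2]}$ and $\Delta(A)^{x_2}$, and since $a_{[2]}^2=a^{x_2}$ its elements are of the form $\gamma^{x_2}$ with $\gamma\in\Delta(A)$ or have activity $(3\,4)$; hence $(a_{[1]})^{x_2}=(e,e,a_{[1]},a_{[1]})$, which is its own $[2]$-projection, does not lie in $B(A)\Delta(A)$, although it certainly lies in $\Delta(B(\Delta(A)))$ because $a_{[1]}\in B(A)\leq B(\Delta(A))$. So $\Delta(B(\Delta(A)))\neq B(\Delta(A))$ for this $A$. What does follow from the first identity — and is all that is invoked later, e.g.\ in the proof of Theorem~\ref{cent} — is the statement $\Delta(B(\Delta(A)))=\Delta(B(A))$, since $\Delta(B(A)\Delta(A))=\langle\Delta(B(A)),\Delta(A)\rangle$ and $\Delta(A)\leq\Delta(B(A))$. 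I would prove that version (or, equivalently, that $\Delta(B(A))$ is both $B$-closed and $\Delta$-closed) rather than attempt a coordinate-matching argument for the identity as printed.
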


\begin{proof}
	\begin{enumerate}
		\item [(i)] $(A^{w'x_i})_{[k]} = 1$ if $k$ is different from $i,$ and
		$(A^{w'x_i})_{[k]}=A^{w'x_i}$ if $k=i$.
		\item [(ii)] 
		\begin{equation*}
			\begin{split}
				B(\Delta(A)) &= B(A) \langle B(A^w)\,\,|\,\, |w|\geq1 \rangle\\
				&=  B(A) \langle A^w\,\,|\,\, |w|\geq 1 \rangle \\
				&= B(A) A \langle A^w\,\,|\,\, |w|\geq 1 \rangle\\
				&= B(A)\Delta(A).
			\end{split}
		\end{equation*}
	\end{enumerate}
\end{proof}

\section{The centralizer of self-similar abelian groups}

We start with a description of the structure of centralizers of self-similar abelian groups and then proceed to prove Theorem A.

The very first step is a description of the structure of the centralizer of subgroups of finite symmetric groups.

\begin{lemma}\label{c}
	Let $Q$ be a subgroup of $Sym(m)$, having the set of orbits $O_{(i)}$, $1\leq i \leq s$. Then 
	the centralizer of $Q$ in $Sym(m)$ has the structure 
	$$C_{Sym(m)} (Q) =C_{Sym(m_1)}(Q_{(1)})\cdots C_{Sym(m_s)} (Q_{(s)})\cdot S(Q),$$
	where $S(Q)$ is the subgroup of rigid permutations in $Sym(m)$ with respect to the given set of orbits.
	Let $\{J_1,\dots,J_t\}$ be the partition of the orbits of $Q$ having equal length. Then, $S(Q)$ is isomorphic to the direct product $Sym(J_1) \times \cdots \times Sym(J_t)$. In case $Q$ is abelian then $C_{Sym(m)}(Q)=B(Q)S(Q).$
\end{lemma}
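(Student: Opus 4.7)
The plan is to write any $\pi \in C_{Sym(m)}(Q)$ as the product of an intra-orbit factor in $\prod_{i=1}^{s} C_{Sym(m_i)}(Q_{(i)})$ and a rigid inter-orbit factor in $S(Q)$. The starting observation is that any centralising permutation preserves the $Q$-orbit partition: for $x$ in a $Q$-orbit $O$, $(O)\pi = (Q\cdot x)\pi = Q\cdot (x)\pi$ is again a $Q$-orbit, necessarily of the same cardinality. Hence $\pi$ induces a permutation $\bar\pi$ of $\{O_{(1)},\dots,O_{(s)}\}$ which only interchanges orbits within each size-class $J_k$.

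Next I would construct the unique rigid element $\xi \in Sym(m)$ realising $\bar\pi$, by letting $\xi$ act on each pair $O_{(i)}, O_{(\bar\pi(i))}$ via the unique order-preserving bijection. Then $\xi$ is rigid by construction and $\pi\xi^{-1}$ fixes each $O_{(i)}$ setwise, so it lies in $\prod_i Sym(m_i)$. Once one verifies that $\xi \in C(Q)$, it follows that $\pi \xi^{-1} \in \prod_i C_{Sym(m_i)}(Q_{(i)})$, yielding the claimed factorisation. The structure of $S(Q)$ is then transparent: rigidity forces a rigid permutation to preserve each $J_k$ setwise, and within $J_k$ the element is uniquely determined by its induced permutation of $J_k$, giving $S(Q) \cong Sym(J_1)\times\cdots\times Sym(J_t)$.

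For the abelian refinement, a transitive abelian action has trivial point-stabilisers (if $q \in Q_{(i)}$ fixes $p \in O_{(i)}$, then for any $r \in Q_{(i)}$ we have $q(rp) = rqp = rp$ by abelianness, so $q = e$), hence each $Q_{(i)}$ acts regularly on $O_{(i)}$. The centralizer in the full symmetric group of a regular abelian subgroup coincides with the subgroup itself, so $C_{Sym(m_i)}(Q_{(i)}) = Q_{(i)}$ for each $i$, whence $\prod_i C_{Sym(m_i)}(Q_{(i)}) = B(Q)$ and $C_{Sym(m)}(Q) = B(Q)\,S(Q)$. The main obstacle I anticipate is verifying that the rigid lift $\xi$ of $\bar\pi$ lies in $C(Q)$: the hypothesis that $\pi \in C(Q)$ supplies a $Q$-equivariant bijection between each swapped pair of orbits, and one must show that this agrees with the order-preserving bijection modulo a translation in $Q_{(\bar\pi(i))}$, which the regularity of the $Q_{(i)}$ in the abelian case makes routine and which, more generally, is what legitimises the choice of orbit-indexing implicit in the statement.
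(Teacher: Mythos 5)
Your skeleton (split $\pi\in C_{Sym(m)}(Q)$ into an orbit-preserving part times a rigid part, identify $S(Q)\cong Sym(J_1)\times\cdots\times Sym(J_t)$, and use regularity of transitive abelian actions to get $C_{Sym(m_i)}(Q_{(i)})=Q_{(i)}$) is the natural one, and the paper in fact states this lemma without any proof, so your argument has to stand on its own. It does not: the step you yourself flagged --- that the rigid lift $\xi$ of $\bar{\pi}$ centralizes $Q$ --- is a genuine gap, and it cannot be closed by the ``differs from the order-preserving bijection by a translation'' argument, because the order-preserving bijection between two equal-length orbits need not be $Q$-equivariant at all; whether it is depends entirely on how the points inside the orbits happen to be labelled. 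Concretely, take $Q=\langle a\rangle\leq Sym(6)$ with $a=(1\,2\,3)(4\,6\,5)$, orbits $O_{(1)}=\{1,2,3\}$, $O_{(2)}=\{4,5,6\}$; $Q$ is abelian and both $Q_{(i)}$ are regular. Then $\pi=(1\,4)(2\,6)(3\,5)$ centralizes $a$, its induced orbit permutation has the unique rigid lift $\xi=(1\,4)(2\,5)(3\,6)$, but $a^{\xi}$ sends $1\mapsto 3$, so $\xi\notin C(Q)$, and $\pi\xi^{-1}=(2\,3)(5\,6)\notin C_{Sym(3)}(Q_{(1)})\times C_{Sym(3)}(Q_{(2)})=\langle(1\,2\,3)\rangle\times\langle(4\,5\,6)\rangle$. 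Since the rigid lift of a given orbit permutation is unique, $\pi$ admits no factorization of the claimed kind, so for this labelling even the inclusion $C(Q)\subseteq C_{Sym(m_1)}(Q_{(1)})\cdots C_{Sym(m_s)}(Q_{(s)})S(Q)$ fails. The reverse inclusion fails too: for $Q=\langle(1\,2\,3),(4\,5\,6)\rangle$ one has $|C_{Sym(6)}(Q)|=9$ while $|B(Q)S(Q)|=18$, because equal-length orbits need not be isomorphic as $Q$-sets. Regularity only gives you that two $Q$-isomorphisms between orbits differ by an element of $Q_{(j)}$; it does not make the rigid bijection a $Q$-isomorphism.

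What a correct argument needs (and what the lemma tacitly assumes) is this: the restriction of $\pi\in C(Q)$ to $O_{(i)}$ is a $Q$-isomorphism onto $O_{(\bar{\pi}(i))}$, so the orbit permutations realized by $C(Q)$ form $\prod_k Sym(J'_k)$ where the $J'_k$ are the classes of orbits that are isomorphic as $Q$-sets, not merely of equal length; fixing one reference $Q$-isomorphism for each such pair, every $\pi$ factors as an element of $\prod_i C_{Sym(m_i)}(Q_{(i)})$ times the corresponding reference lift, since two $Q$-isomorphisms differ by a $Q$-automorphism of the target orbit, i.e.\ by an element of $C_{Sym(m_j)}(Q_{(j)})$ (no regularity needed for this step). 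The statement as printed is correct exactly when equal-length orbits are isomorphic $Q$-sets and the points are labelled coherently, so that the rigid bijections are themselves $Q$-equivariant --- which holds in all the examples the paper computes (e.g.\ the double adding machine and the $m$-adding machine of multiplicity $s$), but not for an arbitrary $Q$ with the given definition of rigidity. So: you correctly located the crux, but declaring it routine is precisely where the proof (and, without the implicit labelling convention, the statement itself) breaks; either add that hypothesis explicitly or replace $S(Q)$ by the group generated by rigid permutations that actually centralize $Q$, adjusting the claim $S(Q)\cong Sym(J_1)\times\cdots\times Sym(J_t)$ accordingly.
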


Given a subgroup $G$ of $\mathcal{A}_m$, we write $C(G)$ for $C_{\mathcal{A}_m}(G)$ and define
$$S(G)=\{ \xi \in S(P(G)) \,\,|\,\, C_{}(G)\cap Stab_{\mathcal{A}_m}(1)\xi \,\,\text{ non-empty}\}.$$
Since $S(P(G)) = S(P(B(G))$,  any lifting of $\xi$ in  $S(G)$ to $C_{\mathcal{A}_m}(G)$ may be modified by an element of $B(G)$ so as to induce a rigid permutation of the orbits, in this sense that we choose each $\xi$ to be rigid 
and define $R(G)$ to be the group generated by these liftings. We note that any two liftings of $\xi$ differ by an element of $Stab_{{G}} (1)$. If $\xi$ itself is in $C_{\mathcal{A}_m}(G)$ then ${G_{[i]}}^{\xi}=G_{[j]}$ where ${O(i)}^{\xi}=O_{(j)}$, and therefore $\xi$ normalizes $B(G)$.

\begin{proposition}\label{teo1}  Let $A$ be a self-similar abelian subgroup of $\mathcal{A}_m$ and recall $B(A)$, $C(A)$, $R(A)$. Then,
	\begin{enumerate}
		\item [(i)] $Stab_{A}(1)$ is a subgroup of $A^{x_1}\cdot A^{x_2}\cdots A^{x_s}$.
		\item [(ii)] $C(A)=Stab_{C(A)} (1) B(A) R(A)$;
		\item [(iii)]  the subgroup $Stab_{C(A)} (1) B(A)$ is normal in $C(A)$;
		\item[(iv)]  $B(A)$ centralizes $Stab_{C(A)} (1)$;
		\item[(v)] $C(A)$ and $Stab_{C(A)} (1)$ are $\Delta$ invariant;
		\item [(vi)] $\Delta(A)$, ${\Delta}(B(A))$ and $\overline{\Delta(B(A))}$ are self-similar abelian groups of the same permutation-type as $A$.
	\end{enumerate}
\end{proposition}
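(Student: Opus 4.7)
The plan is to work through items (i)--(vi) in order, with the pivotal step being the early verification that $B(A)\leqslant C(A)$; after that, items (ii)--(v) reduce to routine bookkeeping of activities and supports. For (i), let $\alpha=(\alpha_1,\dots,\alpha_m)\in Stab_A(1)$ (self-similarity yields $\alpha_k\in A$), and for any $\beta=(\beta_1,\dots,\beta_m)\sigma\in A$, expanding $[\alpha,\beta]=e$ coordinate-wise yields $\alpha_{(k)\sigma}=\beta_k^{-1}\alpha_k\beta_k=\alpha_k$ since $A$ is abelian; letting $\beta$ range so that $\sigma$ covers $P(A)$, the $\alpha_k$ are forced to be constant on each orbit $O_{(i)}$, producing $\alpha\in A^{x_1}\cdots A^{x_s}$. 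For the key inclusion $B(A)\leqslant C(A)$: each $\alpha_{[i]}$ has support in the $O_{(i)}$-subtrees, commuting with $\alpha'_{[j]}$ for $j\neq i$ by disjointness of supports and with $\alpha'_{[i]}$ because $(\cdot)_{[i]}\colon A\to A_{[i]}$ is a homomorphism of the abelian group $A$.

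Now (ii)--(v) follow mostly mechanically. For (ii), given $\gamma\in C(A)$, Lemma \ref{c} factors $P(\gamma)=\tau\xi$ with $\tau\in B(P(A))$, $\xi\in S(P(A))$; lift to $\beta\in B(A)$ with $P(\beta)=\tau$ and $\rho\in R(A)$ with $P(\rho)=\xi$, both in $C(A)$, so that $\beta^{-1}\gamma\rho^{-1}\in C(A)\cap Stab_{\mathcal{A}_m}(1)=Stab_{C(A)}(1)$; normality of $Stab_{C(A)}(1)$ in $C(A)$ then realigns the resulting product into the desired $Stab_{C(A)}(1)\cdot B(A)\cdot R(A)$. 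For (iii), $Stab_{C(A)}(1)\cdot B(A)$ is the preimage in $C(A)$ of $B(P(A))$ under the activity map, and $B(P(A))$ is normal in $C_{Sym(m)}(P(A))\supseteq P(C(A))$ by Lemma \ref{c}. For (iv), expanding $[\eta,\alpha]=e$ for $\eta\in Stab_{C(A)}(1)$ and $\alpha\in A$ yields coordinate relations $\eta_{(k)\sigma}=\alpha_k^{-1}\eta_k\alpha_k$ that split orbit-by-orbit, giving $[\eta,\alpha_{[i]}]=e$ for every $i$. For (v), $[\gamma^{x_i},\alpha]=e$ reduces---since $\sigma=P(\alpha)$ preserves each $O_{(i)}$---to $\gamma$ commuting with the states $\alpha_k\in A$ for $k\in O_{(i)}$, which holds as $\gamma\in C(A)$; $\Delta$-invariance of $Stab_{C(A)}(1)$ is then immediate because $\gamma^{x_i}$ always lies in $Stab_{\mathcal{A}_m}(1)$.

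For (vi), self-similarity of $\Delta(A)$ and $\Delta(B(A))$ is immediate from state-closure: the level-$1$ states of $\alpha^{x_i\omega'}$ are either $e$ or $\alpha^{\omega'}$. Abelianness is proved by induction on $|\omega|+|\omega'|$: the case $\omega=\omega'$ uses abelianness of $A$ through the monomorphism; distinct initial letters $x_i\neq x_j$ give disjoint supports at level $1$; equal initial letters reduce via $(\cdot)^{x_i}$ to the inductive hypothesis inside the $O_{(i)}$-subtree. The same argument applies with $B(A)$ in place of $A$ (noting $B(A)$ is itself self-similar abelian, being a direct product of the homomorphic images $A_{[i]}$). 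The activities satisfy $P(\Delta(A))=P(A)$ and $P(\Delta(B(A)))=B(P(A))$, each with permutation-type $(P_{(1)},\dots,P_{(s)})$. Finally, since $Sym(m)$ is discrete, $P(\overline{G})=P(G)$ for any $G\leqslant \mathcal{A}_m$, so closure preserves activity; and the closure of a self-similar abelian subgroup of $\mathcal{A}_m$ is self-similar (state-closure passes to limits) and abelian (continuity of the commutator), yielding the claim for $\overline{\Delta(B(A))}$.

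The main obstacle---which once handled cleanly makes the rest straightforward---is the inclusion $B(A)\leqslant C(A)$ needed in (ii): without it, the lifting $\beta\in B(A)$ would leave $C(A)$ and the factorization would collapse. This inheritance of abelianness by the orbit-projections $A_{[i]}$ is the structural fact underpinning essentially every subsequent item.
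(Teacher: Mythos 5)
Your items (i), (ii), (iv) and (v) follow essentially the same route as the paper: (i) by conjugating a first-level stabilized element and using that its states lie in $A$ (self-similarity) and commute (abelianness); (ii) by reading activities in $C_{Sym(m)}(P(A))=B(P(A))S(P(A))$ via Lemma \ref{c} and lifting the two factors, with your explicit observation that $B(A)\leqslant C(A)$ making the lifting legitimate (the paper uses this silently); (iv) and (v) by the same coordinate computations. In (iii) you take a genuinely different, and cleaner, route: instead of the paper's explicit conjugation of $h\in A_{[i]}$ by $r\in R(A)$ corrected by an element $g\in A_{[i^{\xi}]}$ of matching activity, you identify $Stab_{C(A)}(1)B(A)$ as the preimage in $C(A)$ of $B(P(A))$ under the activity map and quote normality of $B(P(A))$ in $C_{Sym(m)}(P(A))$. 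That normality is not actually stated in Lemma \ref{c}, but it is true and immediate from uniqueness of the block decomposition (if $c$ centralizes $q$ then $\left(q_{[i]}\right)^{c}=q_{[i^{\bar c}]}$, where $\bar c$ is the induced permutation of the orbits), so the step is sound; just cite the right fact rather than the lemma.

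In (vi), however, your abelianness induction has a hole exactly at the decisive case. The three cases you list ($\omega=\omega'$; distinct initial letters; equal initial letters) do not cover the mixed case where one word is empty: the commutation of an element $\alpha\in A$ (or of $B(A)$) with nontrivial first-level activity against a partial-diagonal element $\beta^{\omega'}$ with $|\omega'|\geq 1$. This is the only place where self-similarity genuinely enters (conjugating $\beta^{\omega' x_i}$ by $\alpha$ permutes the $O_{(i)}$-coordinates and conjugates the equal entries by the states $\alpha_k\in A$), and it is precisely the case around which the paper organizes its induction, namely that $A_{[i]}$ commutes with $(A_{[j]})^{w}$ for all $i,j$, by induction on $|w|$. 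For $\Delta(A)$ you can repair the omission from your own item (v): $A\leqslant C(A)$ and $C(A)$ is $\Delta$-invariant, so $A^{\omega}$ centralizes $A$. But for $\Delta(B(A))$ item (v) only yields that $B(A)^{\omega}$ centralizes $A$, not $B(A)$; you need the block-wise reduction that conjugation of $(A_{[j]})^{w'x_i}$ by an element of $A_{[i]}$ conjugates its entries by states of elements of $A$, which lie in $A\leqslant B(A)$, and then the inductive hypothesis for all blocks applies. As written, ``the same argument applies with $B(A)$ in place of $A$'' inherits the missing case. The rest of (vi) is fine, and your statement $P(\Delta(B(A)))=B(P(A))$ (same permutation-type as $A$) is in fact more accurate than the paper's parenthetical claim that it equals $P(A)$.
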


\begin{proof}
	
	\begin{enumerate}
		\item [(i)] An element of $Stab_{A}(1)$ has the form $a=(a_{(1)}, a_{(2)},\dots, a_{(s)})$. As $P_{(i)}$ is transitive on $O_{(i)}$, conjugation of $a$ by elements $A_{[i]}$ shows that the entries of $a_{(i)}$ are all equal.
		\item [(ii)] Follows from the fact that $C(A)$ modulo $Stab_A(1)$ is isomorphic to a subgroup of $C_{Sym(m)}(P(A))$, as in Lemma \ref{c}.
		\item [(iii)]  Given $r=(r_1,\dots,r_m)\xi \in R(A)$, where $\xi \in S(P(A))$ and $h \in A_{[i]}$ written as $h=h'\cdot\sigma$, where $h'\in Stab_{\mathcal{A}_m}(1)$ and $\sigma \in P_{(i)}$, we have
		$\sigma^\xi \in P_{(i^\xi)}$. As there exists $g \in A_{[i^\xi]}$ such that $g=g'\cdot \sigma^\xi$, we have 
		$(h^r)\cdot g^{-1}=(h')^r\cdot (g')^{-1} \in Stab_{C(A)}(1)$
		and therefore the subgroup
		$Stab_{C(A)} (1) B(A)$ is normal in $C(A)$. 
		\item [(iv)]  Let $f=(f_1,f_2,\dots,f_m)=f_{[1]} f_{[2]}\cdots f_{[s]}  \in Stab_{\mathcal{A}_m} (1)$ and $a=a_{[1]}\cdots a_{[s]} \in A.$ 
		Then, 
		$$f^a= {f_{[1]}}^{a_{[1]}}\cdots {f_{[s]}}^{a_{[s]}}.$$
		Thus,
		$f \in C(A)$ iff $f_{[i]}$ centralizes $A_{[i]}$ 
		for all $i$.
		As $f_{[i]}$ centralizes $A_{[j]}$ for all $j$ different $i$,
		it follows that 
		$f \in C(A)$ iff $f_{[i]}$ centralizes $B(A)$
		for all $i$.
		
		\item [(v)] Given $c \in C(A)$, easily $c^{x_i} \in C(A).$
		Therefore, 
		${C(A)}^{\Delta}=C(A)$.
		Since 
		$(Stab_{C(A)} (1) )^{\Delta} \leqslant {C(A)}^{\Delta}=C(A)$
		and ${(Stab_C (1))}^w \leqslant Stab_{C(A)} (1) $ for $w\in \Delta$ of length at least 1, we have 
		$$(Stab_{C(A)} (1) )^{\Delta}=Stab_{C(A)} (1) ).$$
		
		\item[(vi)] 
		Since $A$ is a self-similar abelian group, $\Delta(A)$ is also self-similar abelian.
		By induction on the length of elements of $\Delta$, the fact that
		$\Delta(B(A))$ is abelian is reducible to 
		$A_{[i]}$ commutes with $(A_{[j]})^w$
		for all $i,j$. 
		Clearly,  $A_{[i]}$ commutes with $(A_{[j]})^w$ if $w=1$ or $w=w'\cdot x_k$ for $k \neq i$.
		In case $k=i$,  then
		$A_{[i]}$ commutes with $(A_{[j]})^w$
		iff $A_{[i]}$ commutes with $(A_{[j]})^{w'}$.
		Further, since $P(\Delta (B(A)))=P(\Delta(A))=P(A)$, it follows that $\Delta(B(A))$ and $\Delta(A)$ are of the same permutation-type as $A$. Since the topological closure of abelian groups are also abelian, $\overline{\Delta(B(A))}$ is abelian and is also self-similar.
	\end{enumerate}
\end{proof}

Note that items (i) and (ii) above hold for $A$ abelian without the extra condition of being self-similar.

\subsection{The case where $R(A)$ consists of rigid permutations}

Naturally, the complexity of $C(A)$ depends upon that of $R(A)$. In case 
$R(A)=S(A)$, we obtain finer structural results.

\begin{proposition}
	Suppose $R(A)=S(A).$ 
	Then,
	\begin{enumerate}
		\item [(i)] $B(A)$ is normal in $C(A)$ and $\Delta (B(A))$ is normalized by $\Delta(R(A)).$
		\item [(ii)] Define $H= \Delta(B(A)) \Delta(R(A))$. Then
		$ \overline{\Delta(R(A))}$ normalizes $\overline{\Delta({B(A)})}$
		and 
		$\overline{H}=\overline{\Delta ({B(A)})} \,\, \overline{\Delta(R(A))}.$
	\end{enumerate}
\end{proposition}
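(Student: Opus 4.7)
The plan is to prove (i) first by combining the decomposition of $C(A)$ from Proposition 3.1 with the rigidity of the elements of $R(A)=S(A)$, and then to derive (ii) by continuity and compactness arguments in the profinite group $\mathcal{A}_m$. For (i)(a), since Proposition 3.1(iv) gives that $Stab_{C(A)}(1)$ centralizes $B(A)$, and $B(A)$ is abelian, it suffices to check that each $\xi \in R(A)$ normalizes $B(A)$. Because $\xi$ is a rigid permutation in $C(A)$ whose associated $\sigma$ permutes the $s$ orbits, the equation $\alpha=\alpha^\xi=\alpha_{[1]}^\xi\cdots\alpha_{[s]}^\xi$ combined with the fact that $(\alpha_{[i]})^\xi$ has support inside $O_{(i^\sigma)}$ forces, by uniqueness of the decomposition of $\alpha$, the identity $(A_{[i]})^\xi=A_{[i^\sigma]}$. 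Hence $\xi$ permutes the factors of $B(A)=\prod_i A_{[i]}$, and $B(A)\trianglelefteq C(A)$.

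The hard part of (i) is showing $\Delta(R(A))$ normalizes $\Delta(B(A))$. The key auxiliary step is a centralizing lemma: for $r\in R(A)$ and $w_0\in\Delta$ with $|w_0|\geq 1$, the element $r^{w_0}$ centralizes $B(A)$. Indeed, writing $w_0=w_0'x_i$, we have $r^{w_0}=(r^{w_0'})^{x_i}\in Stab(1)$ with state $r^{w_0'}$ at each $v\in O_{(i)}$ and trivial states elsewhere; for $b=\prod_k b_{[k]}\in B(A)$, only $b_{[i]}$ has overlapping first-level support, and a direct computation shows that $b_{[i]}^{r^{w_0}}$ has the same first-level permutation as $b_{[i]}$ and states $b_v^{r^{w_0'}}$ at $v\in O_{(i)}$; these states coincide with $b_v$ because $b_v\in A$ by self-similarity, and $r^{w_0'}\in C(A)$ by the $\Delta$-invariance of $C(A)$ in Proposition 3.1(v). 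With this lemma, showing that a generator $r^{w_0}$ of $\Delta(R(A))$ normalizes a generator $b^w$ of $\Delta(B(A))$ reduces to three cases: if neither of $w,w_0$ is a suffix of the other, the supports of $b^w$ and $r^{w_0}$ lie in disjoint subtrees and the two elements commute; if $w_0$ is a suffix of $w$, write $w=w_1 w_0$, and the monomorphism property of $w_0$ together with the Permutability Relations yields $(b^w)^{r^{w_0}}=((b^{w_1})^r)^{w_0}\in(B(A))^{w_1' w_0}\subseteq\Delta(B(A))$ for some relabeling $w_1'$; and if $w$ is a strict suffix of $w_0$, write $w_0=w_0^{(-)}w$ with $|w_0^{(-)}|\geq 1$, and the monomorphism property of $w$ together with the centralizing lemma give $(b^w)^{r^{w_0}}=(b^{r^{w_0^{(-)}}})^w=b^w$. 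The main obstacle is isolating the centralizing lemma; once that is in hand, the three-case analysis is essentially routine.

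For (ii), continuity of conjugation in the topological group $\mathcal{A}_m$ upgrades the normalization from (i) to the closures: if $r_n\in\Delta(R(A))\to r$ and $b_n\in\Delta(B(A))\to b$, each conjugate $b_n^{r_n}$ lies in $\Delta(B(A))$ and converges to $b^r$, showing that $\overline{\Delta(R(A))}$ normalizes $\overline{\Delta(B(A))}$. Hence $\overline{\Delta(B(A))}\cdot\overline{\Delta(R(A))}$ is a subgroup, and as the continuous image under multiplication of a compact product of closed sets in the profinite group $\mathcal{A}_m$, it is itself closed. This closed subgroup contains $H=\Delta(B(A))\Delta(R(A))$ and therefore contains $\overline{H}$; conversely, any product $b\cdot r$ with $b=\lim b_n\in\overline{\Delta(B(A))}$ and $r=\lim r_n\in\overline{\Delta(R(A))}$ equals $\lim(b_n r_n)\in\overline{H}$. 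Combining the two inclusions yields $\overline{H}=\overline{\Delta(B(A))}\cdot\overline{\Delta(R(A))}$.
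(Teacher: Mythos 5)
Your proposal is correct. For (i) you are essentially reproducing the paper's argument: the paper also obtains $B(A)\trianglelefteq C(A)$ from the observation that a rigid $\xi\in C(A)$ permutes the factors $A_{[i]}$, and it proves that $\Delta(R(A))$ normalizes $\Delta(B(A))$ by an induction that peels the last letters off the two words, using exactly your two ingredients: the identity $\left(g^{x_k}\right)^{\xi}=g^{x_{k^{\xi}}}$ and the $\Delta$-invariance of $C(A)$ from Proposition \ref{teo1}(v); your ``centralizing lemma'' for $r^{w_0}$ with $|w_0|\geq 1$ is in effect the base case of that induction made explicit, and your suffix trichotomy is just different bookkeeping for the same case analysis (and you chose the correct notion --- suffix, not prefix --- given the convention $\delta_u\delta_v=\delta_{vu}$). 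The genuine divergence is in (ii): the paper argues that $P(A)$ and $P(S(A))$ intersect trivially, hence so do the layer closures and therefore the level stabilizers of $\Delta(B(A))$ and $\Delta(R(A))$, which yields the factorization $Stab_{H}(k)=Stab_{\Delta(B(A))}(k)\cdot Stab_{\Delta(R(A))}(k)$ and then the claims by passing to the limit; you instead exploit that $\mathcal{A}_m$ is a compact, metrizable topological group, so normalization passes to closures by continuity of conjugation, and the product of the two closed (hence compact) subgroups, one normalizing the other, is a closed subgroup that sandwiches $\overline{H}$. Your route is shorter and rests only on standard topological-group facts; the paper's stabilizer factorization is more hands-on and gives level-by-level information about $H$ of the sort used elsewhere in the paper (compare the proof of Theorem \ref{cent}(iii)), but for the statement itself both arguments are complete.
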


\begin{proof}
	\begin{enumerate}
		\item [(i)] As $R(A)=S(A)$, conjugation by elements of $R(A)$ induce permutations of the set of $A_{[i]}$'s and therefore $R(A)$ normalizes $B(A)$. Hence $B(A)$ is a normal subgroup of $C(A)$.
		We calculate for $b \in B(A)$ the conjugates of $b^v$ by $\xi^w$ for $b=a_{[i]} \in A_{[i]}$ and $v,w \in \Delta.$ 
		As $B(A)$ is normal in $C(A)$, we may assume $|v|>0$ and write $v=v'x_i$ for some $i$. Also, since for any $g \in \mathcal{A}_m$, 
		${\left(g^{x_k}\right)}^{\xi}=g^{x_l}$  where $l=k^{\xi}$,
		we may assume $|w| >0$ and write $w=w'x_j$. 
		Now check that ${\left(b^{w'x_i}\right)}^{{\xi}^{v'x_j}}
		=\left(b^{w'}\right)^{x_i}$, if $j=i$ 
		and $ {\left(b^{w'x_i}\right)}^{{\xi}^{v'x_j}} = b^{w'x_i}$, otherwise. 
		The result follows by induction on the lengths of $v$ and $w$.
		\item[(ii)] Since $P(A)$ and $P(S(A))$ intersect trivially it follows that their layered closures $L(P(A))$,
		$L(P(S(A)))$ also intersect trivially. Therefore, for all $k\geq 0$,
		the $k$-th stabilizers of $\Delta(B(A))$ and $\Delta(R(A))$ intersect trivially and thus,
		$$Stab_{H}(k) =
		Stab_{\Delta(B(A))} (k)  \cdot Stab_{\Delta(R(A))} (k) .$$
		Hence,
		$\overline{\Delta(R(A))}$ normalizes
		$\overline{\Delta(B(A))}$. 
	\end{enumerate}
\end{proof}

We prove Theorem A in a more detailed form. 

\begin{theorem}\label{cent}
	Let  $A\leqslant \mathcal{A}_m$ be an abelian self-similar group and let $A^*=C_{\mathcal{A}_m}(\Delta(A))$. Then
	\begin{enumerate}
		
		\item[(i)]  $A^*$ leaves each orbit $O_{(i)}$ invariant (that is, $R(\Delta(A))=1$) and  $$A^*=Stab_{A^*}(1)B(A);$$
		
		\item[(ii)] $Stab_{A^*}(1)={(A^*)}^{x_1}{(A^*)}^{x_2}\cdots{(A^*)}^{x_s};$ 
		\item[(iii)]
		$A^*=\overline{\Delta({B(A)})};$
		\item [(iv)] $A^*$ is a maximal abelian subgroup of $\mathcal{A}_m$ and is the unique one containing $\Delta(A)$.
	\end{enumerate}
\end{theorem}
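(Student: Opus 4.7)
The plan is to establish parts (i)--(iv) in order, using Proposition \ref{teo1} applied to the self-similar abelian group $\Delta(A)$ as the main engine.

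For (i), the crucial step is to show that $R(\Delta(A))=1$. Suppose $r=(r_1,\dots,r_m)\xi$ lies in $C(\Delta(A))$ with rigid $\xi$ sending $O_{(i)}$ to $O_{(j)}$ for some $i\neq j$. For any nontrivial $a\in A$, the element $d=a^{x_i}$ has trivial activity and support on $O_{(i)}$; the permutability relations of Lemma \ref{act} show $r^{-1}dr$ is supported on $O_{(j)}$, contradicting $r^{-1}dr=d$. Since any rigid permutation preserving every orbit is the identity, $R(\Delta(A))=1$. Applying Proposition \ref{teo1}(ii) to $\Delta(A)$, using Lemma \ref{s1}(ii) to write $B(\Delta(A))=B(A)\cdot\Delta(A)$, and noting that $\Delta(A)\subseteq Stab_{A^*}(1)\cdot B(A)$, one obtains $A^*=Stab_{A^*}(1)\cdot B(A)$.

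For (ii), I would first show that $A^*$ itself is self-similar. By (i), every $c\in A^*$ has orbit-preserving activity. For $k\in O_{(i)}$ and any $b\in\Delta(A)$, the element $b^{x_i}\in\Delta(A)$ lies in $Stab_{\mathcal{A}_m}(1)$ with state $b$ at position $k$; comparing the $k$-th first-level states on both sides of $c\cdot b^{x_i}=b^{x_i}\cdot c$ yields $c_k b=b c_k$, whence $c_k\in A^*$. The inclusion $(A^*)^{x_i}\subseteq Stab_{A^*}(1)$ is immediate from the $\Delta$-invariance of $A^*$ (Proposition \ref{teo1}(v)). For the reverse inclusion, let $c=(c_1,\dots,c_m)\in Stab_{A^*}(1)$ and take $a\in A$ with first-level activity $\sigma$ sending $k$ to $k'$ inside $O_{(i)}$; the identity $c^a=c$ gives $c_{k'}=c_k^{a_k}$. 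Since $A$ is self-similar, $a_k\in A$, and by the previous step applied with $A$ in place of $\Delta(A)$, $c_k\in C(A)$. Therefore $a_k$ centralizes $c_k$ and $c_{k'}=c_k$; denoting this common value on $O_{(i)}$ by $a_{(i)}\in A^*$, we conclude $c=\prod_i(a_{(i)})^{x_i}$.

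For (iii), the inclusion $\overline{\Delta(B(A))}\subseteq A^*$ follows from $A\subseteq B(A)$ (so $\Delta(A)\subseteq\overline{\Delta(B(A))}$) together with Proposition \ref{teo1}(vi) (so $\overline{\Delta(B(A))}$ is abelian and hence centralizes $\Delta(A)$). For the reverse inclusion, I would prove by induction on $n\geq 1$ that $A^*=\Delta(B(A))\cdot Stab_{A^*}(n)$; the base case $n=1$ is (i) combined with $B(A)\subseteq\Delta(B(A))$. For the inductive step, given $c\in Stab_{A^*}(n)$, use (ii) to write $c=\prod_i(a_{(i)})^{x_i}$ with each $a_{(i)}\in Stab_{A^*}(n-1)$, apply the induction hypothesis to approximate $a_{(i)}\equiv d_i\bmod Stab_{A^*}(n)$ with $d_i\in\Delta(B(A))$, and observe that the disjoint-support factors $(d_i)^{x_i}$ commute, their product lies in $\Delta(B(A))$, and approximates $c$ modulo $Stab_{A^*}(n+1)$. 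Intersecting over $n$ yields $A^*\subseteq\overline{\Delta(B(A))}$. For (iv), (iii) implies that $A^*$ is abelian; any abelian subgroup strictly containing $A^*$ would lie inside $C(\Delta(A))=A^*$, a contradiction, so $A^*$ is maximal abelian; and any maximal abelian subgroup containing $\Delta(A)$ is contained in $C(\Delta(A))=A^*$ and therefore equals $A^*$ by maximality.

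The main obstacle is the equal-states step in (ii): proving $c_{k'}=c_k$ within a single orbit requires combining two ingredients that have to be developed simultaneously, namely the self-similarity of $A$ (which places $a_k$ inside $A$) and the derived containment $c_k\in C(A)$ (which ensures $a_k$ commutes with $c_k$). Once this orbit-uniformity is in place, the self-similarity of $A^*$, the level-by-level induction in (iii), and the maximality arguments of (iv) are essentially formal.
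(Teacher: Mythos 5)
Your proposal is correct and follows essentially the same route as the paper: the same contradiction via conjugating $a^{x_i}$ by a non-trivial rigid lift in (i), the same commutator computation with elements $b^{x_i}\in\Delta(A)$ to place first-level states in $A^*$ and the transitivity-plus-self-similarity argument to equalize states within each orbit in (ii), the same level-by-level approximation by $\prod_{|w|\le k}B(A)^w$ in (iii), and the same centralizer/maximality argument in (iv). The only difference is presentational: you spell out the orbit-uniformity step and the $\Delta$-invariance of $A^*$ explicitly (the paper leaves these terse) and phrase the approximation in (iii) as an induction rather than the paper's displayed product identities.
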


\begin{proof} 
	Denote $\Delta(A)$ by $K$.
	\begin{enumerate}
		\item[(i)] We need to prove $R(K)=1$. Suppose, by contradiction, that  $R(K) \neq 1$ and let $r=(r_{(1)},\dots,r_{(s)})\xi \in R(K)$ with $\xi \neq 1$. Let $\alpha \in A^*$. Since $\xi\neq 1$ there are $O_{(i)}\neq O_{(j)}$ such that $\xi: O_{(i)} \mapsto O_{(j)}$. Then ${(\alpha^{x_i})}^r=\beta^{x^j}$, where $\beta=(\beta_{(1)},\dots,\beta_{(s)})$ and
		$\beta_{(k)}=1$ for $k \neq j$, 
		$\beta_{(j)}= (\alpha,\dots,\alpha)^{r_{(i)}}$, a contradiction. 
		Therefore 
		$$A^*=Stab_{A^*}(1)B(K).$$
		By Lemma \ref{s1}, $B(K)=B(A)\Delta(A)$ and therefore, 
		\begin{equation}\label{e1}
			A^*=Stab_{A^*}(1)B(A).
		\end{equation}
		
		By Proposition \ref{teo1} (iv), $B(A)$ is central in $A^*$.
		\item[(ii)] 
		Let  $\gamma=(\gamma_{(1)},\dots,\gamma_{(s)}) \in Stab_{A^*}(1)$, $w \in \Delta$ and $a \in A$. Then,
		$$[\gamma, a^{wx_i}]=
		(e,\dots,e,[\gamma_{(i)},a^w],e,\dots,e),  \,\,
		\gamma_{(i)} \in C(A^{\Delta})=A^*.$$
		Therefore,
		$\gamma \in {(A^*)}^{x_1}\cdots{(A^*)}^{x_s}.$ Thus, 
		\begin{equation}\label{e2}
			Stab_{A^*}(1)={A^*}^{x_1}{A^*}^{x_2}\cdots{A^*}^{x_s}.
		\end{equation}
		
		\item[(iii)] We substitute \eqref{e1} in \eqref{e2} and collect $B(A)^{x_i}$ to the right to obtain

		$$Stab_{A^*} (1) $$
		$$=\prod_{w \in \Delta,\,\,|w|=1}\left( Stab_{A^*}(1)\right)^w \prod_{w \in \Delta,\,\,|w|=1}\left( B(A)\right)^w.$$
		Thus,
		$$Stab_{A^*} (1)=Stab_{A^*} (2) \prod_{w \in \Delta,\,\,|w|=1}\left( B(A)\right)^w.$$
		More generally, 
		
		\begin{equation}\label{e3}
			Stab_{A^*} (1)= \prod_{w \in \Delta,\,\,|w|=k}\left( Stab_{A^*}(1)\right)^w  \prod_{w \in \Delta,\,\,|w|=k}\left( B(A)\right)^w.
		\end{equation}
		and 
		
		\begin{equation}\label{e4}
			Stab_{A^*} (1)= Stab_{A^*} (k+1)\prod_{w \in \Delta,\,\,|w|=k}\left( B(A)\right)^w.
		\end{equation}

		\noindent Let $g \in A^*$. Then, as $A^*=Stab_{A^*}(1) B(A)$, there exists  $b^{(0)} \in B(A)$ such that 
		$$g\left(b^{(0)}\right)^{-1} \in Stab_{A^*} (1)$$
		and using \eqref{e3},
		there exists  $b^{(1)} \in \prod_{w \in \Delta, |w|=1} {(B(A))}^w$ such that
		
		$$g{\left(b^{(0)}\right)}^{-1} {\left(b^{(1)}\right)}^{-1}  \in \prod_{w \in \Delta,\,\, |w|=1} \left(Stab_{A^*}(1)\right)^w \leqslant Stab_{A^*}(2).$$
		Iterating, we produce $b^{(k)}$ in $\prod_{w \in \Delta,\,\, |w|=k}{(B(A))}^w$
		such that 
		$$g{\left( b^{(0)}\right)}^{-1}{\left( b^{(1)}\right)}^{-1}\cdots {\left( b^{(k)}\right)}^{-1} \in Stab_{A^*} (k+1).$$
		Thus we produce the infinite product 
		$$b= \cdots  b^{(k)} \cdots b^{(1)} b^{(0)} \in \overline{\Delta({B(A)})}$$ 
		and
		$$g{b}^{-1} \in \bigcap_{k \geq 0} Stab_{A^*} (k)=1$$ 
		Hence, $g =b$ and $A^*=\overline{\Delta({B(A)})}.$
		
		\item[(iv)] Suppose $M$ is a maximal abelian subgroup of $\mathcal{A}_m$ and $\Delta(A)\leqslant M.$ Then, $C_{\mathcal{A}_m}(M)\leqslant C_{\mathcal{A}_m}(\Delta(A))=A^*.$
		Therefore, $C_{\mathcal{A}_m}(M)$ is abelian and so, $C_{\mathcal{A}_m}(M)=M \leqslant A^*$ and $M=A^*.$
	\end{enumerate}
\end{proof}

Next we prove 

\begin{thmB}
	There exists a finitely generated subgroup $H$ of $B(A)$ such that $A^*=\overline{\Delta(H)}$.
\end{thmB}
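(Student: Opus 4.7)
The plan is to produce $H$ by lifting a finite generating set of $P(A)$ to $A$ and then taking partial-orbit components. Since $P(A)$ is a subgroup of the finite group $Sym(m)$, it is finitely generated; choose $\alpha_1,\dots,\alpha_k\in A$ whose activities $\sigma(\alpha_1),\dots,\sigma(\alpha_k)$ generate $P(A)$, and set
\begin{equation*}
H=\langle (\alpha_l)_{[i]}\mid 1\leq l\leq k,\ 1\leq i\leq s\rangle\leqslant B(A).
\end{equation*}
Then $H$ is generated by at most $ks$ elements. Since each projection $P(A)\to P_{(i)}$ is surjective, the activities $\sigma_{(i)}(\alpha_1),\dots,\sigma_{(i)}(\alpha_k)$ generate $P_{(i)}$ for every $i$, and consequently $P(H)=\prod_{i=1}^{s}P_{(i)}=B(P(A))$.

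The crucial identification is $P(A^*)=B(P(A))=P(H)$. Indeed, $A^*=\overline{\Delta(B(A))}$ by Theorem \ref{cent}; since $B(A)^w\leqslant Stab_{\mathcal{A}_m}(1)$ for $|w|\geq 1$ and topological closure preserves first-level activities, $P(A^*)=P(B(A))=B(P(A))$. Together with the surjectivity of $H\to P(A^*)$, this yields the decomposition $A^*=H\cdot Stab_{A^*}(1)$. By Theorem \ref{cent} (ii), $Stab_{A^*}(1)=\prod_i (A^*)^{x_i}$; and since $a^{x_i}\in Stab_{\mathcal{A}_m}(n+1)$ exactly when $a\in Stab_{\mathcal{A}_m}(n)$, intersecting with $Stab_{\mathcal{A}_m}(n+1)$ gives
\begin{equation*}
Stab_{A^*}(n+1)=\prod_{i=1}^{s}\bigl(Stab_{A^*}(n)\bigr)^{x_i}.
\end{equation*}

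From here the argument parallels the telescoping in the proof of Theorem \ref{cent} (iii), with $H$ playing the role of $B(A)$. I would prove by induction on $n$ that
\begin{equation*}
Stab_{A^*}(n)\subseteq Stab_{A^*}(n+1)\cdot\prod_{w\in\Delta,\,|w|=n}H^w,
\end{equation*}
with base case the decomposition $A^*=Stab_{A^*}(1)\cdot H$ established above. For the inductive step, apply each $x_i$ to the previous inclusion, use $(Stab_{A^*}(n+1))^{x_i}\leqslant Stab_{A^*}(n+2)$ and $(H^w)^{x_i}=H^{wx_i}$, and take the product over $i$. Iterating gives $A^*\subseteq Stab_{\mathcal{A}_m}(n+1)\cdot\Delta(H)$ for every $n$, and since $\bigcap_{n} Stab_{\mathcal{A}_m}(n)=1$ every element of $A^*$ lies in $\overline{\Delta(H)}$. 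The reverse containment is immediate from $H\leqslant B(A)\leqslant A^*$ together with the $\Delta$-invariance and closedness of $A^*$.

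I do not expect a real obstacle, since essentially all the hard work has already been carried out in the proof of Theorem \ref{cent} (iii). The only delicate point is the choice of $H$: it must surject onto $P(A^*)=B(P(A))$, not merely onto each $P_{(i)}$ separately. This is what forces the lifts $\alpha_l$ to generate $P(A)$ jointly; a single element per orbit would not suffice when $P(A)$ is a proper subdirect product of the $P_{(i)}$'s.
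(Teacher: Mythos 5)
Your proposal is correct and follows essentially the same route as the paper: you pick a finitely generated $H\leqslant B(A)$ whose activities exhaust $P_{(1)}\times\cdots\times P_{(s)}$ (the paper lifts generating sets of each $P_{(i)}$ directly, you take orbit components of lifts of generators of $P(A)$ — the same thing in effect), deduce $A^*=Stab_{A^*}(1)H$, and then rerun the telescoping argument of Theorem \ref{cent}(iii) with $H$ in place of $B(A)$. Your stabilizer recursion $Stab_{A^*}(n+1)=\prod_i\bigl(Stab_{A^*}(n)\bigr)^{x_i}$ is just a repackaging of the paper's equations \eqref{e3}--\eqref{e4}, so there is nothing substantively new or missing.
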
 

\begin{proof}
	Choose a generating set  
	$\{\sigma_{ij} \,\,|\,\, 1\leq j \leq l_i\}$ for each $P_{(i)}$ and choose $\beta_{ij} \in  B(K)$  having activity $\sigma_{ij}$. 
	Define $H=\langle \beta_{ij}\,\, |\,\,1\leq i\leq s,\,\, 1\leq j \leq l_i \rangle.$
	Then 
	$$B(A)\leqslant Stab_{B(A)} (1) H$$
	
	and by Proposition \ref{teo1} (i),
	
	$$Stab_{B(A)} (1) \leqslant {B(A)}^{x_1}\cdots {B(A)}^{x_s}.$$
	
	Therefore, 
	$$A^* = Stab_{A^*} (1) B(A)
	= Stab_{A^*} (1) H,$$
	
	$$ Stab_{A^*} (1) ={(A^*)}^{x_1}\cdots{(A^*)}^{x_s}$$
	$$= {(Stab_{A^*} (1) H)}^{x_1} \cdots {(Stab_{A^*} (1) H)}^{x_s}$$
	$$=\prod_{w \in \Delta,\,\, |w|=1}
	\left( Stab_{A^*} (1)\right) ^w 
	\prod_{w \in \Delta,\,\, |w|=1} H^w.$$
	Thus, we may substitute $H$ for $B(A)$ in our arguments in the previous theorem to  obtain
	$A^*= \overline{\Delta(H)}.$
	
\end{proof}

An equivalent form of Corollary \ref{c1} is

\begin{corollary}
	Let $A$ be a torsion self-similar abelian group. Then $A$ has finite exponent. 
\end{corollary}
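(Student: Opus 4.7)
The plan is to leverage Theorem B, which guarantees that $A^{*}=\overline{\Delta(H)}$ for some finitely generated subgroup $H\le B(A)$, in order to bound the exponent of the ambient maximal abelian group $A^{*}\supseteq A$.

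The first step is to observe that, under the torsion hypothesis on $A$, each $A_{[i]}$ is a homomorphic image of $A$ via $\alpha\mapsto\alpha_{[i]}$ (well-defined because the factors $\alpha_{[1]},\dots,\alpha_{[s]}$ have disjoint supports and hence commute pairwise in $\mathcal{A}_m$), and is therefore also torsion and abelian. Consequently $B(A)=A_{[1]}\cdots A_{[s]}$, being a direct product of torsion abelian groups, is itself a torsion abelian group, so the finitely generated subgroup $H$ of $B(A)$ is a finitely generated torsion abelian group and hence \emph{finite}. Set $n=\exp(H)$.

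Next, each partial diagonal $x_i$ is a monomorphism, so for every $w\in\Delta$ the image $H^{w}$ is isomorphic to $H$ and consists of elements of order dividing $n$. Because $\Delta(H)\le\Delta(B(A))\le A^{*}$ is abelian by Proposition \ref{teo1}(vi), the subgroup $\Delta(H)=\langle H^{w}:w\in\Delta\rangle$ also has exponent dividing $n$. Now $g\mapsto g^{n}$ is continuous on $\mathcal{A}_m$, so the set $\{g\in\mathcal{A}_m:g^{n}=e\}$ is closed; since it contains the dense subgroup $\Delta(H)$ of $A^{*}$, it contains $A^{*}=\overline{\Delta(H)}$. Hence $A\le A^{*}$ has exponent dividing $n$, as required.

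I do not foresee a genuine obstacle: once Theorem B is available, the argument is essentially the three-step chain ``finitely generated torsion abelian $\Rightarrow$ finite $\Rightarrow$ bounded exponent under $\Delta$ $\Rightarrow$ bounded exponent in the topological closure''. The substance of the corollary is located in the preceding structural results, not in this deduction.
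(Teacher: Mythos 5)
Your proposal is correct and follows essentially the same route as the paper: invoke Theorem B, note that $H$ is finite because it is a finitely generated subgroup of the torsion abelian group $B(A)$, and then pass the bounded exponent of $H$ through the $\Delta$-closure and the topological closure to $A^{*}\supseteq A$. You simply make explicit the steps (monomorphisms preserve orders, $\{g\in\mathcal{A}_m : g^{n}=e\}$ is closed) that the paper's one-line proof leaves implicit.
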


\begin{proof}
	As $A^*= \overline{\Delta(H)}$,
	and $H$ has finite order, it follows that 
	$A^*$ has the same exponent as $H$ and therefore so does $A$.
\end{proof}

\subsection{Free Abelian Groups}

\begin{proposition}\label{sr}
	Let $A$ be a self-similar abelian group of degree $m = m_{1} + \cdots + m_{s}$ and orbital-type ${\bf m} = (m_{1},\dots, m_{s})$, with $s \geq 2$. Then $A = \Delta(A)$ if and only if there exists a strongly recurrent $A$-data $({\bf m}, {\bf H}, {\bf F})$. 
\end{proposition}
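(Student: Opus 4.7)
The plan is to use the canonical $A$-data attached to the given self-similar representation of $A$ and translate $\Delta$-invariance of $A$ directly into the defining property of strong recurrence on $\mathbf{F}$. For each orbit $O_{(i)}$ I would fix a representative $y_i$, take $H_i=\mathrm{Stab}_A(y_i)$ (which has index $m_i$ in $A$ and, because $A$ is abelian, coincides with the stabilizer of every point of $O_{(i)}$), and let $f_i\colon H_i\to A$ be the state map $\alpha\mapsto \alpha_{y_i}$. This data recovers the given representation of $A$ in $\mathcal{A}_m$, and since $A\leq \mathcal{A}_m$ is faithful its $\mathbf{F}$-core is trivial; hence the genuine content of the equivalence is the surjectivity condition defining strong recurrence.

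The bridge between the two formulations will be Proposition~\ref{teo1}(i): every $\beta\in\mathrm{Stab}_A(1)$ admits a unique factorization $\beta=\gamma_1^{x_1}\gamma_2^{x_2}\cdots\gamma_s^{x_s}$ with $\gamma_j\in A$, and the factor $\gamma_j$ is the common state of $\beta$ along the orbit $O_{(j)}$, namely $f_j(\beta)$. Combined with the containment $\ker(f_j)\leq H_j$, this yields the dictionary I will rely on: for $\alpha\in A$ and a fixed index $i$, the tree automorphism $\alpha^{x_i}$ belongs to $A$ if and only if there exists $\beta\in H_i\cap\bigcap_{j\neq i}\ker(f_j)$ with $f_i(\beta)=\alpha$.

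With the dictionary in hand, both directions drop out quickly. For the forward direction, suppose $A=\Delta(A)$; then for each $\alpha\in A$ and each $i$ the element $\alpha^{x_i}$ lies in $A$ and itself witnesses surjectivity of $f_i$ on $H_i\cap\bigcap_{j\neq i}\ker(f_j)$, so the data is strongly recurrent. For the reverse direction, assume the data is strongly recurrent, fix $\alpha\in A$ and $i$, and produce $\beta\in H_i\cap\bigcap_{j\neq i}\ker(f_j)$ with $f_i(\beta)=\alpha$; since $\ker(f_j)\leq H_j$ we have $\beta\in\bigcap_j H_j=\mathrm{Stab}_A(1)$, so the dictionary forces $\beta=\alpha^{x_i}$ in $\mathcal{A}_m$, whence $\alpha^{x_i}\in A$. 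This gives $A^{x_i}\leq A$ for every $i$, i.e., $A=\Delta(A)$.

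The main point I would check twice is that final identification in the reverse direction: strong recurrence only delivers $\beta$ as an abstract element of $A$ whose states at the selected representatives $y_j$ behave correctly, and I need to promote this to an equality of tree automorphisms $\beta=\alpha^{x_i}$. This is precisely what Proposition~\ref{teo1}(i) provides, since abelianness together with the transitivity of $P_{(j)}$ on $O_{(j)}$ propagates the state at $y_j$ to a constant state along all of $O_{(j)}$ and thereby pins down the first-level decomposition of $\beta$ completely; the self-similar structure then forces $\beta$ and $\alpha^{x_i}$ to coincide at every vertex.
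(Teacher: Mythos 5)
Your forward direction is essentially the paper's: take $H_i$ the stabilizer of a representative of $O_{(i)}$ and $f_i$ the corresponding state map; if $A=\Delta(A)$ then $\alpha^{x_i}$ itself witnesses surjectivity of $f_i$ on $H_i\cap\bigcap_{j\neq i}\ker(f_j)$, and triviality of the $\mathbf{F}$-core follows from faithfulness of the given action. That half is fine, and your use of Proposition~\ref{teo1}(i) to see that an element of $\mathrm{Stab}_A(1)$ is determined by its states at the orbit representatives is correct.

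The gap is in the converse. The proposition hypothesizes the existence of \emph{some} strongly recurrent $A$-data $({\bf m},{\bf H},{\bf F})$, whereas your argument silently assumes this data is the canonical one attached to the given embedding $A\leqslant\mathcal{A}_m$: you need $\bigcap_j H_j=\mathrm{Stab}_A(1)$ and you need each $f_j$ to be the state map at a first-level vertex for your dictionary to force $\beta=\alpha^{x_i}$. For an arbitrary strongly recurrent data the $H_i$ need not be vertex stabilizers of the given representation and the $f_i$ need not be state maps, so nothing in your argument produces $\alpha^{x_i}\in A$. This matters in practice: in the paper's own application of the proposition (Theorem C(ii)) the data is constructed abstractly first and the tree representation only afterwards. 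The paper's proof of the converse therefore works with the representation $\varphi$ induced by the data and transversals $T_1,\dots,T_s$: given $a\in A$, strong recurrence yields $b\in H_i\cap\bigcap_{j\neq i}\ker(f_j)$ with $b^{f_i}=a$, and since $A$ is abelian one has $H_i t b=H_i t$ for every $t\in T_i$, whence $b^{\varphi}=(e,\dots,e,a^{\varphi},\dots,a^{\varphi},e,\dots,e)=a^{\varphi x_i}$, i.e.\ the induced representation is $\Delta$-closed. To repair your proof, either reproduce this transversal computation, or add the observation that the canonical data of the induced representation is the $\varphi$-image of the given data and is again strongly recurrent, and then run your dictionary there; as written, the converse is established only in the special case where the hypothesized data happens to be the canonical data of the given representation.
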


\begin{proof}
	If $A = \Delta(A)$, it is enough to consider the $A$-data $({\bf m}, {\bf H}, {\bf F})$ defined by
	$$H_{1} = Fix_{A}(1), H_{2} = Fix_{A}(m_1+1), \dots, H_{s} = Fix_{A}(m_{1} + \cdots + m_{s - 1}+1),$$
	and
	$$f_{1} = \pi_{1}, f_{2} = \pi_{ m_{1}+1}, \dots, f_{s} = \pi_{m_{1} + \cdots + m_{s - 1}+1},$$
	where $\pi_i$ is the projection on the $i$-th coordinate.  
	
	Now, suppose that we are given a strongly recurrent data $({\bf m}, {\bf H}, {\bf F})$ for $A$. Let $T_{1}, \dots, T_{s}$ transversal of $H_{1}, \dots, H_{s}$ in $A$, respectively. We will show that given $a \in A$ there exists $b \in A$ such that $b^{\varphi} = a^{\varphi x_{i}}$ for all $i = 1, \dots, s$, where $\varphi: A \rightarrow \mathcal{A}_{m}$ is the representation induced by the $A$-data $({\bf m}, {\bf H}, {\bf F})$ and the transversal $T_{1}, \dots, T_{s}$. Indeed, since $f_{i}: H_{i} \cap \bigcap_{j \neq i} ker(f_{j}) \rightarrow A$ is onto there exists $b \in H_{i} \cap \bigcap_{j \neq i} ker(f_{j})$ such that $b^{f_{i}} = a$. Note that $H_{i}tb = H_{i}t$ for any $t \in T_{i}$, then
	$$b^{\varphi} = (e, \dots, e, a^{\varphi},\dots, a^{\varphi}, e,\dots, e) = a^{\varphi x_{i}}.$$
	The result follows.
\end{proof}

Now we prove

\begin{thmC}\label{thmB1}
	Let $A$ be a free abelian group.
	\begin{enumerate}
		\item  [(i)] Suppose $A$ has finite rank.
		\begin{enumerate}
			\item [(a)] If $A$ is a non-transitive self-similar group then $\Delta({A})$ is not finitely generated;
			\item[(b)] Let $A$  be a transitive self-similar subgroup of $\mathcal{A}_m$. Then the self-similar representation of $A$ extends to a non-transitive self-similar representation into $\mathcal{A}_{m+1}$ having orbit-type $(m,1)$ such that $\Delta(A)$, with respect to the second representation, contains a self-similar free abelian group of infinite enumerable rank.
		\end{enumerate}
		
		\item[(ii)] Suppose $A$ has infinite countable rank. Then $A$ can be realized as a self-similar group of orbit-type $(m,1),$ invariant with respect to $\Delta=\langle x_1,x_2 \rangle$.
	\end{enumerate}
\end{thmC}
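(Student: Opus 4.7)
For item (i)(a), I will argue by contradiction, assuming throughout that $A$ is a non-trivial free abelian group. Suppose $\Delta(A)$ is finitely generated. By Proposition \ref{teo1}(vi), $\Delta(A)$ is self-similar abelian of the same orbit-type as $A$, so in particular $s\geq 2$; and directly from the definition $\Delta(\Delta(A))=\Delta(A)$. Applying Proposition \ref{sr} to $G:=\Delta(A)$ yields a strongly recurrent $G$-data $(\mathbf{m},\mathbf{H},\mathbf{F})$. Each virtual endomorphism $f_i:H_i\to G$ is onto with $H_i$ of finite index in $G$, so tensoring with $\mathbb{Q}$ produces a surjection of the finite-dimensional $\mathbb{Q}$-vector space $V=G\otimes\mathbb{Q}$ onto itself, hence an isomorphism; it follows that $\ker(f_i)$ is torsion, and being a subgroup of the finitely generated $H_i$, it is finite. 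Therefore $H_i\cap\bigcap_{j\neq i}\ker(f_j)$ is finite, while strong recurrence requires $f_i$ to map this finite set onto $G\supseteq A$, which is infinite---a contradiction.

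For item (i)(b), I will extend the given self-similar representation of $A$ from $\mathcal{A}_m$ into $\mathcal{A}_{m+1}$ by adjoining a fixed letter $m+1$, producing orbits $\{1,\dots,m\}$ and $\{m+1\}$ of type $(m,1)$ and a partial diagonal $x_2:\alpha\mapsto(e,\dots,e,\alpha)$. Inside $\Delta(A)$ I consider the subgroup $B=\langle A^{x_2^k}:k\geq 0\rangle$. Each $A^{x_2^k}$ is isomorphic to $A$ and hence free abelian of rank $n\geq 1$, and reading states down the branch $m+1,(m+1)^2,\dots$ shows that the state of an element of $A^{x_2^j}$ at vertex $(m+1)^K$ is trivial for $j<K$ and equals the underlying $A$-element for $j=K$; hence any finite relation across distinct summands vanishes term by term, so $B=\bigoplus_{k\geq 0}A^{x_2^k}$ is free abelian of countably infinite rank. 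For self-similarity of $B$, the non-trivial states of $a^{x_2^k}$ on the branch $(m+1)^{*}$ are of the form $a^{x_2^l}$ with $l\leq k$, while off this branch they are states of $a\in A$, and both classes lie in $B$.

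For item (ii), I take the explicit construction displayed in the introduction: set $\alpha_1=(e,\dots,e,\alpha_1,e)(1\,2\cdots m)(m+1)\in\mathcal{A}_{m+1}$ and $A=\Delta(\langle\alpha_1\rangle)$. Proposition \ref{teo1}(vi) gives that $A$ is self-similar abelian, while $\Delta$-invariance and orbit-type $(m,1)$ are immediate from the construction and from the activity of $\alpha_1$. The remaining task is to show $A$ is free abelian of countably infinite rank with basis $\{\alpha_i\}_{i\geq 1}$ defined by $\alpha_{2i-1}=\alpha_i^{x_1}\,(i\geq 2)$ and $\alpha_{2i}=\alpha_i^{x_2}\,(i\geq 1)$. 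The adding-machine identity $\alpha_1^{x_1}=\alpha_1^m$ lets me peel leading $x_1$'s off any $w\in\Delta$, so every $\alpha_1^w$ is an integer power of some $\alpha_1^{w'}$ with $w'$ either empty or beginning with $x_2$; iterating the two recursions then yields $A=\langle\alpha_i:i\geq 1\rangle$. The main obstacle is the independence of the $\alpha_i$: in any purported relation $\prod_{i\leq N}\alpha_i^{p_i}=e$, the level-one activity forces $m\mid p_1$, and then comparing the coordinate-$1$ and coordinate-$(m+1)$ states of the resulting stabilizer-level element splits the relation into two relations on the $\alpha_j$ with $j\leq\lceil N/2\rceil$, completing an induction on $N$.
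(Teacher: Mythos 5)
Your proposal is correct, and it diverges from the paper's proof in two of the three parts, so a comparison is in order. For (i)(a) you follow essentially the paper's skeleton: pass to $K=\Delta(A)$, observe $K=\Delta(K)$ and that the orbit count is still $s\geq 2$, and invoke Proposition \ref{sr}; the only difference is the endgame. The paper uses injectivity of $f_i$ on $H_i\cap\bigcap_{j\neq i}\ker f_j$ (forced by triviality of the $\mathbf{F}$-core) together with finite index to make $\bigcap_j\ker f_j$ nontrivial, whereas you tensor with $\mathbb{Q}$ to see each $\ker f_i$ is finite and contradict surjectivity of $f_i$ onto the infinite group $K$; both are sound, and both implicitly need $A\neq 1$, which you rightly make explicit. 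For (i)(b) the paper takes a genuinely different route: it builds an $A^{\omega}$-data $((m,1),(H_1,H_2),(f_1,f_2))$ with $f_1$ acting by the original virtual endomorphism on the first summand and $f_2$ the shift, and asserts $A_1^{\varphi}\leq (A^{\omega})^{\varphi}\leq \Delta(A_1^{\varphi})$. Your route---extend the representation by a fixed letter $m+1$ and exhibit $\bigoplus_{k\geq 0}A^{x_2^{k}}$ inside $\Delta(A)$---is more elementary and has the advantage that membership in $\Delta(A)$ is visible by construction; in the paper's recursion the images of the higher summands satisfy $Z=(Z,\dots,Z,Z')$, which under the usual reading of the induced representation lie only in the topological closure of $\Delta(A_1^{\varphi})$, so your direct construction is arguably the cleaner verification of the stated claim. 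For (ii) the paper checks that an explicit $A$-data is strongly recurrent and quotes Proposition \ref{sr}, obtaining freeness from the abstract group and faithfulness from the trivial core, while you work directly on the tree with $\Delta(\langle\alpha_1\rangle)$; your generation argument via $\alpha_1^{x_1}=\alpha_1^{m}$ and the splitting of a stabilized relation into its coordinate-$1$ and coordinate-$(m+1)$ parts is fine, but the induction needs its base case stated explicitly: when $N=1$ the bound $\lceil N/2\rceil=N$ gives no reduction, and you must finish $\alpha_1^{p_1}=e$ by the divisibility descent $m\mid p_1$, i.e.\ by noting that the adding-machine component forces $\alpha_1$ to have infinite order.
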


\begin{proof}
	
	\begin{enumerate}
		\item [(i)-(a)] 
		
		Suppose that $K = \Delta(A)$ is finitely generated. Then $K = \Delta(K)$ and by Proposition \ref{sr} there exists a strongly recurrent $K$-data $({\bf m}, {\bf H}, {\bf F})$. Since $K$ is an abelian group, we have that $f_i|_{H_{i} \cap \bigcap_{j \neq i} ker(f_{j})}$ is injective, $ H_{i} \cap \bigcap_{j \neq i} ker(f_{j}) \simeq K$  and the index 
		$$\left[K:  H_{i} \cap \bigcap_{j \neq i} ker(f_{j})\right]$$
		is finite for all $i=1,\dots,s.$
		Therefore $ker(f_{1})\cap \cdots \cap ker(f_{s})\neq 1;$ a contradiction. \\

		\item[(i)-(b)] Let $f: H \rightarrow A$ be a simple virtual endomorphism, where $H$ is a subgroup of index $m$ in $A$. Consider the free abelian group of infinite rank $A^{\omega} = \oplus_{i = 1}^{\infty} A_{i}$ where $A_{i} = A$ for all $i $. Write $H_{1} = H \oplus ( \oplus_{i = 2}^{\infty} A_{i})$, $H_{2} = A^{\omega}$ and for $i=1,2$, define the homomorphisms  $f_{i}:H_{i}\rightarrow A^{\omega}$
		\begin{eqnarray*}
			f_{1} &:&\,(h, a_{2}, a_{3}, \dots) \mapsto (h^f, a_{2}, a_{3}, \dots);
		\end{eqnarray*}
		\begin{center}
		\end{center}
		\begin{eqnarray*}
			f_{2} &:&\,(a_{1}, a_{2}, a_{3}, \dots) \mapsto (a_{2}, a_{3}, a_{4}, \dots).
		\end{eqnarray*}
		\noindent It follows that the $A^{\omega}$-data 
		$$({\bf m + 1} = (m, 1), {\bf H} = (H_{1}, H_{2}), {\bf F} = \{f_{1}, f_{2}\})$$
		has a trivial ${\bf F}$-core. Let $\varphi: A^{\omega} \rightarrow \mathcal{A}_{m+1}$ be the self-similar representation of $A^{\omega}$ induced by the above $A^{\omega}$-data. Then,
		$$A_{1}^{\varphi} \leq (A^{\omega})^{\varphi} \leq \Delta(A_{1}^{\varphi})$$
		and $A_{1}^{\varphi}$ is self-similar. \\
		
		\item[(ii)]
		Let $m \geq 2$ be an integer and let $\{a_1,a_2,a_3,\dots\}$ be a free basis of $A$. Consider $H_1=\langle a_1^m,a_2,a_3,\dots\rangle$ and $H_2=A.$ Define the homomorphisms $f_i:H_i \to A$ ($i=1,2$) which extend the maps 
		$$
		f_{1} :\,a_{1}^{m}\mapsto a_{1}, \,\, a_{2i}\mapsto e\text{ }\left(i\geq 1\right), \,\, a_{2i-1}\mapsto a_i\text{ }\left(i\geq 2\right),$$
		$$f_{2} :\,a_{2i-1}\mapsto e\text{ }\left( i\geq 1\right) , \,\, a_{2i}\mapsto a_{i}\text{ }\left(i\geq 1\right).
		$$
		Note that the $A$-data
		${((m,1),(H_1,H_2),(f_1,f_2))}$ is strongly recurrent. By Proposition \ref{sr}, the induced representation is closed under $\Delta=\langle x_1,x_2\rangle$, where by definition
		$$a^{x_1}=(a, ..., a,e),\,\,\, a^{x_2}=(e, ..., e, a).$$
		Then
		$$A\simeq \langle \alpha_i \,\,| \,\,\, i=1,2,\dots\rangle,\,\,\,\, \text{where}$$
		$$\alpha_1=(e, ..., e, \alpha_{1},e)(1 \, 2 \, ... \, m),  \alpha_{2i-1}= {\alpha_i}^{x_1}\,\, \left( i\geq 2\right)\,,$$ $$\,\alpha_{2i}={\alpha_i}^{x_2}\,\,(i\geq 1).$$
	\end{enumerate}
\end{proof}

\section{Self-similar cyclic groups}

\subsection{A procedure for computing the centralizer of self-similar cyclic groups.}

Let $A$ be a cyclic self-similar group of $\mathcal{A}_m$ of orbit-type $(m_1,\dots,m_s)$. Then $A$ is generated by
$$a=(a^{i_{11}},\dots,a^{i_{1m_1}}, \dots, a^{i_{s1}},\dots,a^{i_{sm_s}})\sigma_{(1)}\cdots\sigma_{(s)},$$
$$=a_{[1]} \cdots a_{[s]}, \,\, \text{where}$$
$$a_{[1]}=(a^{i_{11}},\dots,a^{i_{1m_1}}, e, \dots, e)\sigma_{(1)}, \dots, a_{[s]}= (e,\dots,e,a^{i_{s1}},\dots,a^{i_{sm_s}})\sigma_{(s)}$$
for some integers $i_{kl}.$ 

Thus we have 
$$P_{(1)}=\langle \sigma_{(1)}\rangle  \,\,, \dots,\,\,P_{(s)}= \langle \sigma_{(s)} \rangle,$$
and $ A_{[i]}=\langle a_{[i]} \rangle$ for $1\leq i \leq s.$

By Proposition \ref{teo1},
$$C(A)=Stab_{C(A)} (1) B(A) R(A), \text{where}$$
$$B(A)=A_{[1]}\cdots A_{[s]} .$$

\noindent \textbf{Step 1.} {The form of ${Stab_C(1)}$.}

Let $c=(c_{11},\dots,c_{1m_1},\dots,c_{s1},\dots,c_{sm_s})$ be an element of $Stab_C(1)$. The relation $ca=ac$ translates to
$$c_{jk}a^{i_{jk}}=a^{i_{jk}}c_{j(k)^{\sigma_j}}, \,\, j=1,\dots, s\,\,,\,\, k=1,\dots,m_j.$$
From these relations we obtain  
$$Stab_{C(A)}(1)=\mathbf{C_1} \times \cdots \times \mathbf{C_{s}}, $$
where $\mathbf{C_{1}}, ..., \mathbf{C_{s}}$ are respectively the sets
$$\{(c_{11},(c_{11})^{a^{i_{11}}},(c_{11})^{a^{i_{11}+i_{12}}},\dots,(c_{11})^{a^{i_{11}+\cdots+i_{1,m_1-1}}}) \,\, | \,\, c_{11} \in C(a^{i_{11}+\cdots+i_{1m_1}})\},$$
$$\vdots $$
$$\{(c_{s1},(c_{s1})^{a^{i_{s1}}},(c_{s1})^{a^{i_{s1}+i_{s2}}},\dots,(c_{s1})^{a^{i_{s1}+\cdots+i_{s,m_s-1}}}) \,\, | \,\, c_{s1} \in C(a^{i_{s1}+\cdots+i_{sm_s}})\}.$$

\noindent \textbf{Step 2.} Computing $R(A)$. 

Let $r=(r_{11},\dots,r_{1m_1},\dots,r_{s1},\dots,r_{sm_s})\xi \in R(A)$, where $\xi \in S(A)$. Identify $$11:=\mathbf{1},12:=\mathbf{2},\dots,1m_1:=\mathbf{m_1},21:=\mathbf{m_1+1}, \dots, sm_s:=\mathbf{m}.$$
Then the relation $ra=ar$ is equivalent to the system of equations 
$$r_ja_{j^{\sigma}}=a_jr_{j^{\sigma}}, j=\mathbf{1},\dots,\mathbf{m},$$
which may or not have a solution.

It may be easier to work with the right normal form of a which is found by conjugation, as follows. The conjugator will be an infinite product whose first term is

$$g=(g_{(1)},g_{(1)},\dots,g_{(s)}) \in \mathcal{A}_m$$
defined by  
$$g_{(i)}=(a_{i1},e,{(a_{i2})}^{-1},{(a_{i2}a_{i3})}^{-1},\dots,{(a_{i2}\cdots a_{im_{i-1}})}^{-1}).$$

Then, $a^g=b$, where 

$$b=(b_{(1)},b_{(2)},\dots,b_{(s)})\sigma_1\sigma_2\cdots\sigma_s,$$
$$b_{(i)}=(e,e,\dots,e,c_i)$$ 
$$c_i=a_{i2}a_{i3}\cdots a_{i,m_{i-1}}a_{i1}.$$
The second term of the conjugator is 
$h={h_{11}}^{x_1}\cdots{h_{s1}}^{x_s},$ where
$h_{11},\dots,h_{s1}$ are computed to normalize $c_1,\dots,c_s$ as $g$ was. The process continues iterating these steps. 

In the case of a self-similar $A=\langle a\rangle$, we have
$$a_{(k)}=(a^{i_{k1}},\dots,a^{i_{km_k}}), \,\,\, k=1,\dots,s,$$ 
$$g_{(k)}=(a_{i1},e,{(a_{i2})}^{-1},{(a_{i2}a_{i3})}^{-1},\dots,{(a_{i2}\cdots a_{im_{i-1}})}^{-1}).$$
Define $j_k=i_{k1}+i_{k2}+\cdots+i_{km_k}$, $k=1,\dots,s$, then by Step 1 of the procedure,
$$Stab_{C(b)}(1)={C(a^{j_1})}^{x_1}\times {C(a^{j_2})}^{x_2} \times \cdots \times {C(a^{j_s})}^{x_s} $$
and so,
$$Stab_{C(a)} (1)= {\left(Stab_{C(b)} (1)\right)}^{g^{-1}}.$$

In the transitive case (that is, $s=1$), we have that
$C(a)={\left({C(a^{j_1})}^{x_1}\right)}^{g^{-1}}\langle a \rangle$.

\subsubsection{The $m$-adding machine of multiplicity $s$}

This machine is an element of $\mathcal{A}_{ms}$
defined by
$$a=(a_{(1)},\dots,a_{(s)})\sigma,$$
where $a_{(1)}=a_{(2)}=\cdots= a_{(s)}=(e,\dots,e,a)$
and
$$\sigma=(1 \,2 \cdots m)(m+1\, \,m+2 \, \cdots \, 2m) \cdots ((s-1)m+1\, \, (s-1)m+2 \, \cdots \, sm).$$
Then $A=\langle a\rangle$ is a self-similar cyclic group and $R(A)=S(A) \simeq Sym(s).$ 
Also
$$C=C_{\mathcal{A}_{ms}} (A)
= B(A) Stab_{C} (1) R(A),$$
$$Stab_C (1)=C^{x_1}\cdot C^{x_2}\cdots C^{x_s}.$$
By substitution and collection processes, as in the proof of Theorem A, we have
$$C=\prod_{w \in \Delta, \,\, |w|\leq k} {B(A)}^w
\prod_{w \in \Delta, \,\, |w|=k} {Stab_C (1)}^w
\prod_{ w \in \Delta, \,\, |w|\leq k} {R(A)}^w.$$
In the limit we obtain
$$C=\overline{\Delta({ B(A)})} \cdot \overline{\Delta(R(A))}=A^* \cdot \overline{ \Delta(R(A))}.$$
We note that $(a_{[i]})^m=a^{x_i}$ for $1\leq i \leq s$ and $A^*$ is torsion-free. Furthermore, $\overline{\Delta(R(A))} \simeq \mathcal{A}_s.$
Since $A^*$ is self-centralizing,
it is a faithful module for $\mathcal{A}_s$ and is torsion-free. 

As we see, $A^*$ is a new type of module for $\mathcal{A}_s$ which in our view deserves to be studied in greater detail. For natural modules of $\mathcal{A}_s$, see \cite{Sid}.

\subsection{Centralizer of self-similar cyclic subgroups of $\mathbf{Aut(\mathcal{T}_4)}$}

We describe the centralizer of intransitive cyclic self-similar subgroups of ${Aut(\mathcal{T}_4)}$. The analysis is based on the orbit-type of the group. 
We will use the following lemma from \cite{PVV}.

\begin{lemma}\label{conj}
	Let $a \in A \leqslant Aut\left( \mathcal{T}_{m}\right) $ and $\xi $ be a unit in the ring of
	integers $\mathbb{Z}_{n}$, where $n$ is the exponent of the group $P(A)$. Then $a^{\xi }$ is conjugate to $a$ by a computable $g\in Aut\left( \mathcal{T}_{m}\right) $. 
\end{lemma}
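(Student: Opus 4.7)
The plan is to construct $g$ recursively, working level by level in $Aut(\mathcal{T}_m)$ and exploiting that $\gcd(\xi, n) = 1$ forces $\sigma^\xi$ to have the same cycle structure as $\sigma$ for every activity $\sigma$ that arises among $a$ and its iterated states. Writing $a = (a_1, \ldots, a_m)\sigma$ with $\sigma = \sigma(a) \in P(A)$, I would first fix a computable $h \in Sym(m)$ with $h^{-1}\sigma h = \sigma^{\xi}$; since $\mathrm{ord}(\sigma) \mid n$ and $\gcd(\xi, n) = 1$, such $h$ exists and can be chosen to preserve each $\sigma$-cycle setwise, namely, on a $k$-cycle $(i_1, \ldots, i_k)$ of $\sigma$ one sets $h : i_j \mapsto i_{1 + (j - 1)\xi \bmod k}$.

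Next, on each cycle $C = (i_1, \ldots, i_k)$ of $\sigma$ I would expand $a^\xi$ directly and isolate the cycle product $c := (a^k)_{i_1} = a_{i_k} a_{i_{k-1}} \cdots a_{i_1}$ as the recursion invariant. A short calculation shows that the cycle product of $a^\xi$ along its image cycle equals $c^{\xi}$ up to a cyclic rotation which has been absorbed into the choice of $h$. Writing $g = (g_1, \ldots, g_m)\, h$, the equation $g^{-1} a \, g = a^{\xi}$ unwinds, cycle by cycle, into a chain of relations along $C$ that reduces to a single equation $g_{i_1}^{-1} \, c \, g_{i_1} = c^{\xi}$; the remaining $g_{i_j}$ on $C$ are then forced by successive substitution along the chain. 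Since $c$ lies in $A$ in the self-similar setting relevant here (the states of $a$ belong to $A$, so their activities remain inside $P(A)$ and hence of order dividing $n$), the induction hypothesis applied to $c$ with the same $\xi$ supplies the needed $g_{i_1}$.

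The main obstacle is the well-foundedness of this recursion, since $Aut(\mathcal{T}_m)$ has no descending chain condition on elements and the cycle product $c$ is not literally ``smaller'' than $a$. I would resolve this by the standard profinite approximation: prove the stronger statement that for every depth $d \geq 1$ there is a computable $g^{(d)} \in Aut(\mathcal{T}_m)$ conjugating $a$ to $a^{\xi}$ modulo $Stab_{Aut(\mathcal{T}_m)}(d)$, with compatibility $g^{(d+1)} \equiv g^{(d)} \pmod{Stab(d)}$. At depth $d$ only finitely many cycle-product equations appear, each already reduced by one tree level, so induction on $d$ makes the construction effective. Passing to the limit in the profinite topology of $Aut(\mathcal{T}_m)$ then yields the desired $g$, whose computability is inherited from the finitary character of each $g^{(d)}$.
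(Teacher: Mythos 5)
The paper does not prove this lemma at all: it is quoted as a known result from the reference [PVV] (Gawron--Nekrashevych--Sushchansky), so there is no internal proof to compare against. Your argument is, in substance, a correct reconstruction of the standard conjugacy criterion in $Aut(\mathcal{T}_m)$ that underlies that reference: match the cycles of $\sigma(a)$ and $\sigma(a)^{\xi}$ by a rigid-on-cycles permutation $h$ (possible since $\gcd(\xi,n)=1$ forces equal cycle structure), observe that for a $k$-cycle based at $i_1$ the cycle product of $a^{\xi}$ is $\bigl((a^{k})_{i_1}\bigr)^{\xi}=c^{\xi}$ because $a^{k}$ fixes $i_1$ and sections multiply on fixed vertices, reduce $g^{-1}ag=a^{\xi}$ to one equation $g_{i_1}^{-1}c\,g_{i_1}=c^{\xi}$ per cycle with the remaining sections forced, and recurse; your level-by-level (portrait/profinite) formulation correctly handles the fact that the recursion is not well-founded on elements, and it also delivers computability of $g$. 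The only point you should make fully explicit is the hypothesis needed for the recursion to close: the induction requires that the cycle product $c$ and, inductively, all states arising later have activities of order dividing $n$, which does not follow from the literal statement (where $P(A)$ is only the first-level activity group of an arbitrary $A$) but does hold in the setting where the lemma is used, namely $A=\langle a\rangle$ self-similar, so that $c\in A$; you flag this, but it deserves to be stated as an assumption rather than an aside (equivalently one can take $n$ to be the exponent of the group generated by the activities of all states of $a$). Minor conventions (whether $h$ should use $\xi$ or $\xi^{-1}$ on each cycle) depend on the left/right action convention and do not affect the argument.
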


\begin{center}
	\textbf{Orbit-type (2,2)} 
\end{center}  The alphabet $Y=\left\{ 1,2,3,4\right\} $ is the union two $A$-orbits, say $%
O_{(1)}=\left\{ 1,2\right\} ,O_{(2)}=\left\{ 3,4\right\}.$ We may assume $P(A)=\left\langle \left( 1\,2\right) \left( 3\,4\right) \right\rangle
,P_{(1)}=\left\langle \left( 1\,2\right) \right\rangle ,P_{(2)}=\left\langle
\left( 3\,4\right) \right\rangle \text{.}$

There exist integers $i_{1},i_{2},i_{3},i_{4}$ such that the group $A$ is
generated by 
\begin{equation*}
	a=\left( a^{i_{1}},a^{i_{2}},a^{i_{3}},a^{i_{4}}\right) \left( 1\,2\right)
	\left( 3\,4\right) \text{.}
\end{equation*}%
Here%
\begin{equation*}
	C_{Sym\left( 4\right) }\left( P\right) =P_{(1)}P_{(2)}S\text{ where }%
	S=\left\langle \left( 1\,3\right) \left( 2\,4\right) \right\rangle \text{,}
\end{equation*}%
\begin{eqnarray*}
	A_{[1]} &=&\left\langle \left( a^{i_{1}},a^{i_{2}},e,e\right) \left(
	1\,2\right) \right\rangle ,A_{[2]}=\left\langle \left(
	e,e,a^{i_{3}},a^{i_{4}}\right) \left( 3\,4\right) \right\rangle , \end{eqnarray*}
\begin{eqnarray*}
	C\left( A\right) &=&\text{ }Stab_{C\left( A\right) }\left( 1\right)B(A) R(A) \text{, } 
\end{eqnarray*}

\begin{eqnarray*}
	B(A)=A_{[1]}A_{[2]},
\end{eqnarray*}
\begin{eqnarray*}
	R(A)= \langle r \rangle, \,\, r&=&\left( r_{1},r_{2},r_{3},r_{4}\right) \xi \text{ and }\xi \in \left\langle
	\left( 1\,3\right) \left( 2\,4\right) \right\rangle .
\end{eqnarray*}

\noindent\textbf{Computing $\mathbf{Stab_{C\left( A\right) }\left(
		1\right)}$ and $\mathbf{r}$.} Let $\ c=\left( c_{1},c_{2},c_{3},c_{4}\right) $. Then $c\in C\left(
A\right) $ if and only if 

\begin{eqnarray*}
	c_{2} &=&\left( c_{1}\right) ^{a^{i_{1}}},c_{1}\in C\left(
	a^{i_{1}+i_{2}}\right) , \\
	c_{4} &=&\left( c_{3}\right) ^{a^{i_{3}}},c_{3}\in C\left( a^{i_{3}+i_{4}}%
	\text{}\right).
\end{eqnarray*}%
Denote $j_{1}=i_{1}+i_{2},$ $j_{3}=i_{3}+i_{4}$. Therefore,

\begin{equation*}
	Stab_{C\left( A\right) }\left( 1\right) =\mathbf{C}_{1}\times \mathbf{C}_{3},
\end{equation*}
\begin{eqnarray*}
	\mathbf{C}_{1} &=&\left\{ \left( c_{1},\left( c_{1}\right)
	^{a^{i_{1}}}\right) \mid c_{1}\in C\left( a^{j_{1}}\right) \right\} , \\
	\mathbf{C}_{3} &=&\left\{ \left( c_{3},\left( c_{3}\right)
	^{a^{i_{3}}}\right) \mid c_{3}\in C\left( a^{j_{3}}\right) \right\} \text{.}
\end{eqnarray*}

Thus the problem of description of $Stab_{C\left( A\right) }\left( 1\right) $
translates to a description of $C\left( a^{j_{1}}\right) ,C\left(
a^{j_{3}}\right) $, a problem we will deal with later.

\noindent\textbf{The form of $\mathbf{r}$.} Suppose $r=\left( r_{1},r_{2},r_{3},r_{4}\right) \left( 1\,3\right) \left(
2\,4\right) \in C\left( A\right) $. Then $ra=ar$ translates to the equations 
\begin{eqnarray*}
	r_{2} &=&a^{-i_{1}}r_{1}a^{i_{3}},\text{ }\left( a^{j_{1}}\right)
	^{r_{1}}=a^{j_{3}} \\
	r_{4} &=&a^{-i_{3}}r_{3}a^{i_{1}},\text{ }\left( a^{j_{3}}\right)
	^{r_{3}}=a^{j_{1}}\text{.}
\end{eqnarray*}%
Combining the 2nd and the 4th equation we obtain $r_{1}r_{3}$ commutes with $
a^{j_{1}}$. We choose $r_{1}r_{3}=e$; that is, $r_{3}=r_{1}^{-1}$. With this
choice, we get 
\begin{equation*}
	r=\left( r_{1},\text{ }a^{-i_{1}}r_{1}a^{i_{3}},\text{ }r_{1}^{-1},\text{ }%
	a^{-i_{3}}r_{1}^{-1}a^{i_{1}}\right) \left( 1\,3\right) \left( 2\,4\right)
\end{equation*}%
where $\left( a^{j_{1}}\right) ^{r_{1}}=a^{j_{3}}$. Thus the existence of $r$
is reduced to whether or not $a^{j_{1}}$ is conjugate to $a^{j_{3}}$.

Factor $j_{1}=2^{k_{1}}l_{1},$ $j_{3}=2^{k_{3}}l_{3}$ where $k_{1},k_{3}\geq
0$ and $l_{1},l_{3}$ odd integers- we agree that $k_{i}$ is infinite if and
only if $j_{i}=0$ . Then $a^{j_{1}}$ is conjugate to $a^{j_{3}}$ if and only
if $k_{1}=k_{3}$ .

\noindent\textbf{Describing $\mathbf{C\left( a^{j_{1}}\right) ,C\left( a^{j_{3}}\right)}.$} By Lemma \ref{conj} there are $g_{1},g_{3}\in Aut\left( \mathcal{T}_{4}\right) $ such that $%
a^{l_{1}}=a^{g_{1}},a^{l_{3}}=a^{g_{3}}$. Let $g=g_{1}^{-1}g_{3}.$
We start with 
\begin{eqnarray*}
	a^{j_{1}} &=&\left( a^{l_{1}}\right) ^{2^{k_{1}}}=\left( a^{g_{1}}\right)
	^{2^{k_{1}}}=a^{2^{k_{1}}g_{1}}, \\
	a^{j_{3}} &=&\left( a^{l_{3}}\right) ^{2^{k_{3}}}=\left( a^{g_{3}}\right)
	^{2^{k_{3}}}=a^{2^{k_{3}}g_{3}}.
\end{eqnarray*}%
Then,%
\begin{eqnarray*}
	a^{2} &=&\left( a^{j_{1}},a^{j_{1}},a^{j_{3}},a^{j_{3}}\right) \text{,} \\
	&=&\left(
	a^{2^{k_{1}}g_{1}},a^{2^{k_{1}}g_{1}},a^{2^{k_{3}}g_{3}},a^{2^{k_{3}}g_{3}}%
	\right)
\end{eqnarray*}%
and
$$
C\left( a^{2}\right) =$$ $$\left( C\left( a^{2^{k_{1}}}\right) ^{g_{1}}\times
C\left( a^{2^{k_{1}}}\right) ^{g_{1}}\times C\left( a^{2^{k_{3}}}\right)
^{g_{3}}\times C\left( a^{2^{k_{3}}}\right) ^{g_{3}}\right) \left\langle
\left( 1,2\right) ,\left( 3,4\right) \right\rangle \left\langle w^{\eta
}\right\rangle,
$$
where $w=\left( g,g,g^{-1},g^{-1}\right) \left( 1\,3\right) \left( 2\,4\right) 
$ and $\eta =0$ if $k_{1}\not=k_{3}$, $\eta =1$ if $k_{1}=k_{3}$.

\bigskip There are basically three situations: $j_{1},j_{3}$ even ; $%
j_{1},j_{3}$ odd; $j_{1}$ odd , $j_{3}$ even.

(i) Let $j_{1},j_{3}$ be even. Then successive substitutions of even powers
of $a$ into the above expression shows $a^{2}\in \cap _{i\geq 1}Stab(i)$ and
therefore, $a^{2}=e$ and $j_{1}=j_{3}=0$, that is $i_{2}=-i_{1},i_{4}=-i_{3}$%
; so, 
\begin{equation*}
	a=\left( a^{i_{1}},a^{-i_{1}},a^{i_{3}},a^{-i_{3}}\right) \left( 1\,2\right)
	\left( 3\,4\right) \text{.}
\end{equation*}%
Thus,%
\begin{equation*}
	B(A)=\left\langle \left( a^{i_{1}},a^{-i_{1}},e,e\right) \left( 1\,2\right)
	,\left( e,e,a^{i_{3}},a^{-i_{3}}\right) \left( 3\,4\right) \right\rangle ,
\end{equation*}%
Also,%
\begin{equation*}
	Stab_{C\left( A\right) }\left( 1\right) =\mathbf{C}_{1}\times \mathbf{C}_{3},
\end{equation*}%
\begin{eqnarray*}
	\mathbf{C}_{1} &=&\left\{ \left( c_{1},\left( c_{1}\right)
	^{a^{i_{1}}}\right) \mid c_{1}\in Aut\left( \mathcal{T}_{4}\right) \right\} , \\
	\mathbf{C}_{3} &=&\left\{ \left( c_{3},\left( c_{3}\right)
	^{a^{i_{3}}}\right) \mid c_{3}\in Aut\left( \mathcal{T}_{4}\right) \right\} \text{,}
\end{eqnarray*}%
and we may choose
\begin{equation*}
	r=\left( e,\text{ }a^{-i_{1}+i_{3}},\text{ }e,\text{ }a^{i_{1}-i_{3}}\right)
	\left( 1\,3\right) \left( 2\,4\right) \text{.}
\end{equation*}

\bigskip (ii) Let $j_{1},j_{3}$ be odd. Then, 
\begin{equation*}
	Stab_{C\left( A\right) }\left( 1\right) =\mathbf{C}_{1}\times \mathbf{C}_{3},
\end{equation*}%
\begin{eqnarray*}
	\mathbf{C}_{1} &=&\left\{ \left( c_{1},\left( c_{1}\right)
	^{a^{i_{1}}}\right) \mid c_{1}\in C\left( A\right) ^{g_{1}}\right\} , \\
	\mathbf{C}_{3} &=&\left\{ \left( c_{3},\left( c_{3}\right)
	^{a^{i_{3}}}\right) \mid c_{3}\in C\left( A\right) ^{g_{3}}\right\},
\end{eqnarray*}%

\begin{equation*}
	B(A)=\left\langle \left( a^{i_{1}},a^{i_{2}},e,e\right) \left( 1\,2\right)
	,\left( e,e,a^{i_{3}},a^{i_{4}}\right) \left( 3\,4\right) \right\rangle ,
\end{equation*}
and
\begin{equation*}
	r=\left( r_{1},\text{ }a^{-i_{1}}r_{1}a^{i_{3}},\text{ }r_{1}^{-1},\text{ }%
	a^{-i_{3}}r_{1}^{-1}a^{i_{1}}\right) \left( 1\,3\right) \left( 2\,4\right)
\end{equation*}%
where $\left( a^{j_{1}}\right) ^{r_{1}}=a^{j_{3}}$. We may choose $g_1,g_3$ conjugators such that $a^{j_1}=a^{g_1},$ $a^{j_3}=a^{g_3}$ and $r_{1}=g={g_1}^{-1}g_3$
and so, 
\begin{equation*}
	r=\left( g,\text{ }a^{-i_{1}}ga^{i_{3}},\text{ }g^{-1},\text{ }%
	a^{-i_{3}}g^{-1}a^{i_{1}}\right) \left( 1\,3\right) \left( 2\,4\right) \text{.}
\end{equation*}

(iii) Let $j_{1}$ odd, $j_{3}$ even. Therefore, $%
a^{j_{1}}=a^{g_{1}},a^{j_{3}}=a^{2^{k_{3}}g_{3}}$ where $k_{3}\geq 1$ $.$

As $a^{j_{1}}$ is not conjugate to $a^{j_{3}}$, $r=e,$ and so, 
\begin{equation*}
	C\left( A\right) =\text{ }Stab_{C\left( A\right) }\left( 1\right) B(A),
\end{equation*}
\begin{eqnarray*}
	Stab_{C\left( A\right) }\left( 1\right) &=&\mathbf{C}_{1}\times \mathbf{C}%
	_{3}, \\
	\mathbf{C}_{1} &=&\left\{ \left( c_{1},\left( c_{1}\right)
	^{a^{i_{1}}}\right) \mid c_{1}\in C\left( A\right) ^{g_{1}}\right\} , \\
	\mathbf{C}_{3} &=&\left\{ \left( c_{3},\left( c_{3}\right)
	^{a^{i_{3}}}\right) \mid c_{3}\in C\left( a^{2^{k_{3}}}\right)
	^{g_{3}}\right\} \text{,}
\end{eqnarray*} 
\begin{equation*}
	B(A)=\left\langle \left( a^{i_{1}},a^{i_{2}},e,e\right) \left( 1\,2\right)
	,\left( e,e,a^{i_{3}},a^{i_{4}}\right) \left( 3\,4\right) \right\rangle .
\end{equation*}\\

\begin{center}
	\textbf{Orbit-type (2,1,1)}
\end{center}

\noindent In this case, we may assume
$$ A=\langle a=(a^{i_1},a^{i_2},a^{i_3},a^{i_4})(1\,2)(3)(4)\rangle,$$
where $i_1,i_2,i_3,i_4$ are integers.

\noindent Let  $P=P_{(1)}=\langle (1\,,2)\rangle$, $P_{(2)}=P_{(3)}=1$. Then $$C_{Sym(4)}(P)=P_{(1)}P_{(2)}P_{(3)}S=PS,$$
where $S=\langle (3\,4)\rangle$ and
$$A_{[1]}=\langle (a^{i_1},a^{i_2},e,e)(1\,2)\rangle\,\,, \,\, A_{[2]}= \langle (e,e,a^{i_3},e)\rangle,$$
$$A_{[3]} =\langle (e,e,e,a^{i_4})\rangle \,\,,\,\, B(A)=A_{[1]}A_{[2]}A_{[3]}.$$ 
It follows that $C(A)= Stab_C(1)B(A)\langle r \rangle,$
where $r=(r_1,r_2,r_3,r_4)\xi$ and $ \xi\in \langle(3\,4)\rangle$. \\

\noindent\textbf{Computing $\mathbf{Stab_{C\left( A\right) }\left(
		1\right) }$ and $\mathbf{r}$.}
If $\xi=1,$ then $r \in Stab_C(1)$ and $C(A)=Stab_C(1)B(A)$. Assume that $\xi=(3\,4)$; that is, $r=(r_1,r_2,r_3,r_4)(3\,4)$.  If there exists a solution $r$, we can choose it to be
$$r=(e,e,r_3,{r_3}^{-1})(3\,4). $$

Factor $i_3=2^{k_3}l_3$, $i_4=2^{k_4}l_4$, where $l_3$ and $l_4$ are odd integers. Then $a^{i_{3}}$ is conjugate to $a^{i_{4}}$ if and only if $k_{3}=k_{4}$.
Here
$$Stab_{C(A)}(1)=\mathbf{C_1} \times \mathbf{C_2} \times \mathbf{C_3},  $$ 
where
$$\mathbf{C_1}=\{(c_1,(c_1)^{a^{i_1}}) \,\, | \,\, c_1 \in C(a^{j_1})\},\,\,\,\mathbf{C_2}=C(a^{i_3}),\,\,\,\mathbf{C_3}=C(a^{i_4}).$$
\noindent\textbf{Description of $\mathbf{C(a^{j_1})}$, $\mathbf{C(a^{i_3})}$ e $\mathbf{C(a^{i_4})}$.} Note that $a^2=(a^{j_1},a^{j_1},a^{2i_3}, a^{2i_4})$. Write $j_1=2^{k_1}l_1$, where $l_1$ is a odd number. By
Lemma \ref{conj} there are conjugators $g_1,g_3,g_4 $ such that $a^{l_1}=a^{g_1}$, $a^{l_3}=a^{g_3}$ and $a^{l_4}=a^{g_4}$. Then

$$ a^{j_1}=a^{2k_1l_1}={(a^{l_1})}^{2^{k_1}}={(a^{2^{k_1}})}^{g_1}.$$
In the same way, $ a^{i_3}={(a^{2^{k_3}})}^{g_3}$ and $ a^{i_4}={(a^{2^{k_4}})}^{g_4}$.
Thus, 
$$C(a^2)=\left({C(a^{2^{k_1}})}^{g_1}, {C(a^{2^{k_1}})}^{g_1}, {C(a^{2^{k_3+1}})}^{g_3},{C(a^{2^{k_4+1}})}^{g_4}\right)\langle(1\,2)\rangle \langle d \rangle.$$

\begin{enumerate}
	\item [(i)] $\mathbf{a^{i_3}}$ \textbf{conjugate to}  $\mathbf{a^{i_4}.}$ 
	In this case, $k_3=k_4$. Define $g={g_3}^{-1}g_4$. Then 
	$${(a^{i_3})}^{g}={(a^{i_3})}^{{g_3}^{-1}g_4}={(a^{2^{k_3}})}^{g_4}={(a^{2^{k_4}})}^{g_4}={a^{i_4}}.$$
	So we can choose $r_3=g$ and $d=(1,1,g,{g}^{-1})(3\,,4).$
	Here we have that
	$$\mathbf{C_1}=\left\{(c_1,(c_1)^{a^{i_1}}) \,\, | \,\, c_1 \in {C(a^{2^{k_1}})}^{g_1}\right\},$$
	$$\mathbf{C_2}=  {C(a^{2^{k_3}})}^{g_3}\,\,\,\text{,}\,\,\, \mathbf{C_3}=  {C(a^{2^{k_4}})}^{g_4},$$
	$$B(A)= \langle  (a^{i_1},a^{i_2},e,e)(1\,2), (e,e,a^{i_3},e), (e,e,e,a^{i_4})\rangle.$$
	\item [(ii)]  $\mathbf{a^{i_3}}$ \textbf{not conjugate to}  $\mathbf{a^{i_4}.}$ In this case $r$ does not exist and $C(A)=Stab_{C(A)}(1)B(A),$ where
	$$\mathbf{C_1}=\left\{(c_1,(c_1)^{a^{i_1}}) \,\, | \,\, c_1 \in {C(a^{2^{k_1}})}^{g_1}\right\},$$
	$$\mathbf{C_2}=  {C(a^{2^{k_3}})}^{g_3}\,\,\,\text{,}\,\,\, \mathbf{C_3}=  {C(a^{2^{k_4}})}^{g_4},$$
	$$B(A)= \langle  (a^{i_1},a^{i_2},e,e)(1\,2), (e,e,a^{i_3},e), (e,e,e,a^{i_4})\rangle.$$
\end{enumerate}

\begin{center}
	\textbf{Orbit-type (3,1).}
\end{center}

Here we may assume the two orbits to be $O_{(1)}=\{1,2,3\}$ and $O_{(2)}=\{4\}$, and
$$ A=\langle a=(a^{i_1},a^{i_2},a^{i_3},a^{i_4})(1\,2\,3)(4)\rangle,$$
where $i_1,i_2,i_3,i_4$ are integers.
Let  $P=P_{(1)}=\langle (1\,2\,3)\rangle$, $P_{(2)}=1$. Then $C_{Sym(4)}(P)=P$ and
$$A_{[1]}=\langle (a^{i_1},a^{i_2},a^{i_3},e)(1\,,2\,,3)\rangle\,\,, \,\, A_{[2]}= \langle (e,e,e,a^{i_4})\rangle\,\,,\,\,B(A)=A_{[1]}A_{[2]};$$
it follows that $C(A)= Stab_{C(A)}(1)B(A)$. \\
\noindent \textbf{Computing $\mathbf{Stab_{C\left( A\right) }\left(
		1\right) }$.}
Let $c=(c_1,c_2,c_3,c_4) \in Stab_C(1)$. The relation $ac=ca$ produces the equations
$$c_1a^{i_1}=a^{i_1}c_2 \,\, , \,\, c_2a^{i_2}=a^{i_2}c_3$$ 
$$c_3a^{i_3}=a^{i_3}c_1 \,\, , \,\, c_4a^{i_4}=a^{i_4}c_4;$$ 
which are equivalent to
$$ c_2={c_1}^{a^{i_1}} \,\, , \,\, c_3={c_1}^{a^{i_1+i_2}}$$
$$c_1 \in C(a^{j_3})\,\,  , \,\,  c_4 \in C(a^{i_4}),$$ 
where $j_3=i_1+i_2+i_3.$

\noindent So, 
$$Stab_C(1)=\left\{(c_1,(c_1)^{a^{i_1}},(c_1)^{a^{i_1+i_2}},c_4) \,\, | \,\, c_1 \in  C(a^{j_3}), c_4 \in C(a^{i_4})\right\}. $$ 
Then, $Stab_C(1)=\mathbf{C_1} \times \mathbf{C_2}$, where
$$\mathbf{C_1}=\{(c_1,(c_1)^{a^{i_1}},(c_1)^{a^{i_1+i_2}}) \,\, | \,\, c_1 \in C(a^{j_3})\}\,\,\,,\,\,\,\mathbf{C_2}=C(a^{i_4}).$$
Factor $j_3=3^{k}l$, with $3\nmid l$.
\begin{enumerate}

	\item [(i)] $\mathbf{k=0.}$ In this case, $j_3=l$ and by Lemma \ref{conj} there is $g \in \mathcal{A}_4$ such that $a^{l}=a^{g}$; so, $C(a^{j_3})={C(a)}^{g}$.
	
	\item [(ii)] $\mathbf{k\neq 0 }.$ By Lemma \ref{conj} there is a conjugator $g$ such that $a^{l}=a^{g}$ and $C(a^{j_3})={C(a^{3^k})}^{g}$.
	It remains to describe $C(a^3)$. Observe that $$a^3=(a^{j_3},a^{j_3},a^{j_3},a^{3i_4}).$$
	Thus, 
	\begin{equation*}
		C\left( a^{3}\right) =\left( C\left( a^{3^{k_{}}}\right) \times
		C\left( a^{3^{k_{}}}\right)\times C\left( a^{3^{k_{}}}\right)
		\times C\left( a^{3i_4}\right) ^{_{}}\right) \left\langle
		\left( 1\,2\,3\right)\right\rangle.
	\end{equation*}%
\end{enumerate}

Now we repeat our previous calculations.

\end{document}